\documentclass[a4paper,11pt]{article}
\usepackage[T1]{fontenc}
\usepackage[utf8]{inputenc}
\usepackage[english]{babel}
\usepackage[stretch=10]{microtype}
\usepackage{mathtools}
\mathtoolsset{centercolon}
\usepackage{amssymb}
\usepackage{tikz}
\usepackage{enumitem}
\usepackage{graphicx}
\usepackage{xcolor}
\usepackage{mathrsfs}
\usepackage[autostyle]{csquotes}
\usepackage[giveninits=true,doi=false,isbn=false,url=false, hyperref,maxbibnames=99,sortcites]{biblatex}
\usepackage{enumitem}
\usepackage{comment}
\usepackage{hyperref}
\hypersetup{
	colorlinks,
	linkcolor={red!50!black},
	citecolor={blue!80!black},
	urlcolor={blue!80!black}
}
\usepackage{amsthm}
\usepackage{geometry}
\geometry{a4paper, twoside, inner=2.43cm, outer = 2.43cm,heightrounded, vmargin={3.5cm,3.5cm}}
\usepackage{cleveref}
\theoremstyle{plain}
\newtheorem{theorem}{Theorem}[section]
\newtheorem{corollary}[theorem]{Corollary}
\newtheorem{lemma}[theorem]{Lemma}
\newtheorem{proposition}[theorem]{Proposition}
\theoremstyle{definition}

\theoremstyle{remark}
\newtheorem*{remark}{Remark}
\pagestyle{empty}
\renewcommand{\epsilon}{\varepsilon}
\renewcommand{\phi}{\varphi}
\newcommand*{\numberset}{\mathbb}
\newcommand*{\N}{\numberset{N}}
\newcommand*{\R}{\numberset{R}}
\DeclarePairedDelimiter\abs{\lvert}{\rvert}
\DeclarePairedDelimiter\norm{\lVert}{\rVert}
\DeclarePairedDelimiterXPP\pnorm[1]{}{\lVert}{\rVert}{_p}{#1}
\DeclarePairedDelimiterXPP\qnorm[1]{}{\lVert}{\rVert}{_q}{#1}
\DeclarePairedDelimiterX{\scalar}[2]{\langle}{\rangle}{#1 , #2}
\newcommand{\disteq}{\stackrel{d}{=}}
\newcommand{\pradial}{p\text{-radial}}
\newcommand{\Kradial}{K\text{-radial}}
\renewcommand{\Pr}{\mathbf{P}}
\DeclareMathOperator{\Ex}{\mathbf{E}}
\DeclareMathOperator{\Var}{\mathbf{Var}}
\DeclareMathOperator{\Cov}{\mathbf{Cov}}
\newcommand{\vol}{\abs}
\newcommand{\pball}{\mathbb{B}_p^n}
\newcommand{\psphere}{\mathbb{S}_p^{n-1}}
\newcommand*\de{\mathop{}\!\mathrm{d}}
\newcommand*{\indic}{\boldsymbol{1}}
\newcommand{\oneover}[1]{\frac{1}{#1}}
\newcommand{\XX}{\mathbf{X}}
\newcommand{\WW}{\mathbf{W}}
\newcommand{\rate}{\mathcal{I}}
\newcommand{\caseif}{\quad\text{if }\,}
\newcommand{\other}{\quad\text{otherwise}}
\newcommand{\bPEX}{\boldsymbol{\norm{P_EX}}}
\newcommand{\borel}{\mathcal{B}}
\newcommand{\G}{\mathbb{G}}
\newcommand{\X}{\mathbf{X}}
\makeatletter
\newcommand{\raisemath}[1]{\mathpalette{\raisem@th{#1}}}
\newcommand{\raisem@th}[3]{\raisebox{#1}{$#2#3$}}
\makeatother
\newcommand{\dconv}{\xrightarrow[\raisemath{3pt}{n\to\infty}]{d}}
\newcommand{\pconv}{\xrightarrow[\raisemath{3pt}{n\to\infty}]{\Pr}}
\newcommand{\asconv}{\xrightarrow[\raisemath{3pt}{n\to\infty}]{\text{a.s.}}}
\addbibresource{chapter_bibliography.bib}
\begin{document}
	\author{Joscha Prochno \and Christoph Thäle \and Nicola Turchi}
	\title{Geometry of \(\ell_p^n\,\text{-balls}\): \\ Classical results and recent developments}
	\date{}
	\maketitle
	\tableofcontents
	\pagestyle{plain}
	\let\thefootnote\relax\footnote{\textit{2010 Mathematics Subject Classification.} 46B06, 47B10, 60B20, 60F10.}
		
	\let\thefootnote\relax\footnote{\textit{Key words and phrases.}	Asymptotic geometric analysis, \(\ell_p^n\text{-balls}\), central limit theorem, law of large numbers, large deviations, polar integration formula.
	}
\section{Introduction}

The geometry of the classical $\ell_p$ sequence spaces and their finite-dimensional versions is nowadays quite well understood. It has turned out that it is often a probabilistic point of view that shed (new) light on various geometric aspects and characteristics of these spaces and, in particular, their unit balls. In this survey we want to take a fresh look at some of the classical results and also on some more recent developments. The probabilistic approach to study the geometry of $\ell_p^n$-balls will be an asymptotic one. In particular, our aim is to demonstrate the usage of various limit theorems from probability theory, such as laws of large numbers, central limit theorems or large deviation principles. While the law of large numbers and the central limit theorem are already part of the -- by now -- classical theory (see, e.g., \cite{SS1991,S1998, S}), the latter approach via large deviation principles was introduced only recently in the theory of asymptotic geometric analysis by Gantert, Kim and Ramanan in \cite{GKR2017}. Most of the results we present below are not new and we shall always give precise references to the original papers. On the other hand, we provide detailed arguments at those places where we present generalizations of existing results that cannot be found somewhere else. For some of the other results the arguments are occasionally sketched as well. 

\medskip

Our text is structured as follows. In Section \ref{sec2:Preliminaries} we collect some preliminary material. In particular, we introduce our notation (Section \ref{subsec21:Notation}), the class of $\ell_p^n$-balls (Section \ref{subsec22:LpBalls}), and also rephrase some background material on Grassmannian manifolds (Section \ref{subsec23:Grassmannians}) and large deviation theory (Section \ref{subsec24:LargeDeviations}). In Section \ref{sec3:PMonConvexBodies} we introduce a number of probability measures that can be considered in connection with a convex body. We do this for the case of $\ell_p^n$-balls (Section \ref{subsec31:PMonLpBalls}), but also more generally for symmetric convex bodies (Section \ref{subsec32:ConeMeasureSymmConvBody}). The usage of the central limit theorem and the law of large numbers in the context of $\ell_p^n$-balls is demonstrated in Section \ref{sec4:CLTandLLN}. We rephrase there some more classical results of Schechtman and Schmuckenschl\"ager (Section \ref{subsec41:ClassiclSchechtSchmuck}) and also consider some more recent developments (Section \ref{subsec42:RecentDevelopmentsMultivariateCLT+OutlookMatrix}) including applications of the multivariate central limit theorem. We also take there an outlook to the matrix-valued set-up. The final Section \ref{sec5:LargeDeviations} is concerned with various aspects of large deviations. We start with the classical concentration inequalities of Schechtman and Zinn (Section \ref{subsec51:ClassicalConcentrationIneq}) and then describe large deviation principles for random projections of $\ell_p^n$-balls (Section \ref{subsec52:RecentDevelopmentsLDPs}). 

\section{Preliminaries}\label{sec2:Preliminaries}

In this section we shall provide the basics from both asymptotic geometric analysis and probability theory that are used throughout this survey article. The reader may also consult \cite{IsotropicConvexBodies,AsymptoticGeometricAnalysisBookPart1,DZ, Kallenberg,dH} for detailed expositions and additional explanations when necessary.

\subsection{Notation}\label{subsec21:Notation}

We shall denote with \(\N=\{1,2,\ldots\} \), \(\R\) and \(\R^+ \) the set of natural, real and real non-negative numbers, respectively. Given \(n\in\N \), let \(\R^n\) be the \(n\text{-dimensional} \) vector space on the real numbers, equipped with the standard inner product denoted by \(\langle\cdot\,,\cdot\rangle\). We write \(\borel(\R^n) \) for the \(\sigma\text{-field}\) of all Borel subsets of \(\R^n \). Analogously, for a subset \(S\subseteq \R^n\), we denote by \(\borel(S)\coloneqq\{A\cap S:A\in\borel(\R^n)\}\) the corresponding trace \(\sigma\text{-field}\) of \(\borel(\R^n)\). Given a set \(A\), we write \(\# A\) for its cardinality. For a set \(A\subseteq\R^n\), we shall write \(\indic_A\colon\R^n\to\{0,1\} \) for the indicator function of \(A\). 
Given  \(A\in\borel(\R^n)\), we write \(\vol{A} \) for its $n$-dimensional Lebesgue measure and frequently refer to this as the volume of \(A\).

Given sets \(I\subseteq\R^+\) and \(A\subseteq\R^n\), we define the set \(I A \)  as follows,
\begin{equation*}
IA\coloneqq\{rx\in\R^n:r\in I, x\in A \}.
\end{equation*}
If \(I=\{r\}\), we also write \(r A\) instead of \(\{r\}A \). Note that \(\R^+\! A\) is usually called the cone spanned by \(A\).

We say that \(K\subseteq\R^n\) is a convex body if it is a convex, compact set with non-empty interior. We indicate with \(\partial K \) its boundary.

Fix now a probability space \((\Omega,\mathcal{F},\Pr) \). We will always assume that our random variables live in this probability space. Given a random variable \(X\colon\Omega\to\R^n\) and a probability measure \(Q \) on \(\R^n\), we write \(X\sim Q \) to indicate that  \(Q \) is the probability distribution of \(X\), namely, for any \(A\in\borel(\R^n)\),
	\begin{equation*}
		\Pr(X\in A)=\int_{\R^n}\indic_A(x)\de Q(x).
	\end{equation*}
We write \(\Ex \) and \(\Var \) to denote the expectation and the variance with respect to the probability \(\Pr \), respectively.

Given a sequence of random variables \((X_n)_{n\in\N}\) and a random variable \(Y\) we write 
	\begin{equation*}
		X_n\dconv Y, \qquad X_n\pconv Y, \qquad X_n\asconv Y,
	\end{equation*}
to indicate that \((X_n)_{n\in\N}\) converges to $Y$ in distribution, probability or almost surely, respectively, as \(n\to\infty\).

We write \(N\sim \mathcal{N}(0,\Sigma)\) and say that \(N\) is a centred Gaussian random vector in \(\R^n\) with covariance matrix \( \Sigma\), i.e., its density function w.r.t. the Lebesgue measure is given by
	\begin{equation*}
		f(x)=\oneover{\sqrt{(2\pi)^n\det\Sigma}}\exp\Bigl(-\frac{1}{2}\big\langle x, \Sigma^{-1} x\big\rangle\Bigr),\qquad x\in\R^n.
	\end{equation*}
For \(\alpha,\theta>0\), we write \(X\sim\Gamma(\alpha,\vartheta) \) (resp. \(X\sim\beta(\alpha,\vartheta)\)) and say that \(X\) has a Gamma distribution (resp. a Beta distribution) with parameters \(\alpha\) and \(\vartheta \) if the probability density function of \(X\) w.r.t. to the Lebesgue measure is proportional to \(x\mapsto x^{\alpha-1}e^{-\vartheta x}\indic_{[0,\infty)}(x)\) (resp. \(x\mapsto x^{\alpha-1}(1-x)^{\vartheta-1}\indic_{[0,1]}(x)\)). We also say that \(X\) has a uniform distribution on \([0,1]\) if  \(X\sim\mathrm{Unif}([0,1])\coloneqq\beta(1,1)\) or an exponential distribution with parameter \(1\) if \(X\sim\exp(\vartheta)\coloneqq\Gamma(1,\vartheta)\).

The following properties of the aforementioned distributions are of interest and easy to verify by direct computation:
	\begin{align}
		&\text{if }X\sim\Gamma(\alpha,\vartheta) \text{ and }Y\sim\Gamma(\tilde\alpha,\vartheta)\text{ are independent, then }\frac{X}{X+Y}\sim\beta(\alpha,\tilde\alpha)\,,\label{eq:Gammaproperty1}\\
		&\text{if }X\sim\mathrm{Unif}([0,1]),\text{ then } X^k\sim\beta(1/k,1)\label{eq:Gammaproperty2}\,,
	\end{align}
for any \(\alpha,\tilde\alpha,\vartheta,k\in(0,\infty) \).

Given a real sequence \((a_n)_{n\in\N}\), we write \(a_n\equiv a\) if \(a_n=a\) for every \(n\in\N\). If \((b_n)_{n\in\N}\) is a positive sequence, we write 
\(a_n=\mathcal{O}(b_n)\) if there exists \(C\in(0,\infty) \) such that \(\abs{a_n}\le C b_n\) for every \(n\in\N\), and 
\(a_n=o(b_n)\) if \(\lim_{n\to\infty} (a_n/b_n)=0 \).

\subsection{The $\ell_p^n$-balls}\label{subsec22:LpBalls}

For \(n\in\N\), let \(x=(x_1,\ldots,x_n)\in\R^n\) and define the \(p\text{-norm}\) of $x$ via
	\begin{equation*}
	\label{eq:pnorm}
		\norm{x}_p\coloneqq
			\begin{dcases}
				\Bigl(\sum_{i=1}^n\abs{x_i}^p\Bigr)^{1/p}&\caseif p\in[1,\infty),\\
				\max_{1\leq i \leq n}\abs{x_i}&\caseif p=\infty.
			\end{dcases}
	\end{equation*}
The unit ball $\pball$ and sphere $\psphere$ with respect to this norm are defined as
	\begin{equation*}
		\pball\coloneqq\{x\in\R^n:\pnorm{x}\le 1\}\qquad\text{and}\qquad \psphere\coloneqq\{x\in\R^n:\pnorm{x}= 1\}=\partial\pball.
	\end{equation*}
As usual, we shall write \(\ell_p^n\) for the Banach space \((\R^n,\pnorm{\cdot})\). 
The exact value of \({\abs{\pball}}\) is known since Dirichlet \cite{D} and is given by
	\begin{equation*}
		\vol{\pball}=\frac{(2\Gamma(1+1/p))^n}{\Gamma(1+n/p)}.
	\end{equation*}
The interested reader may consult \cite{P} for a modern computation. The volume-normalized ball shall be denoted by $\mathbb D_p^n$ and is given by
	\begin{equation*}
	\mathbb{D}_p^n=\frac{\pball}{\abs{\pball}^{1/n}}.
	\end{equation*}
For convenience, in what follows we will use the convention that in the case \(p=\infty\), \(1/p\coloneqq0 \).
It is worth noticing that the restriction on the domain of \(p\) is due to the fact that an analogous definition of \(\pnorm{\,\!\cdot\,\!}\) for \(p<1\) does only result in a quasi-norm, meaning that the triangle inequality does not hold. As a consequence, \(\pball\) is convex if and only if \(p\ge1\). Although a priori many arguments of this  survey do not rely on \(\pnorm{\cdot}\) being a norm, we restrict our presentation to the case \(p\ge 1\), since it is necessary in some of the theorems.

\subsection{Grassmannian manifolds}\label{subsec23:Grassmannians}

The group of \((n\times n)\text{-orthogonal}\) matrices is denoted by $\mathbb O(n)$ and we let $\mathbb{SO}(n)$ be the subgroup of orthogonal $n\times n$ matrices with determinant $1$. As subsets of $\R^{n^2}$, $\mathbb{O}(n)$ and $\mathbb{SO}(n)$ can be equipped with the trace \(\sigma\text{-field}\) of $\borel(\R^{n^2})$. Moreover, both compact groups $\mathbb O(n)$ and $\mathbb{SO}(n)$ carry a unique Haar probability measure which we denote by $\eta$ and $\tilde{\eta}$, respectively. Since $\mathbb{O}(n)$ consists of two copies of $\mathbb{SO}(n)$, the measure $\eta$ can easily be derived from $\tilde{\eta}$ and vice versa.
Given $k\in\{0,1,\ldots,n\}$, we use the symbol $\G^n_k$ to denote the Grassmannian of $k$-dimensional linear subspaces of $\R^n$. We supply $\G_{k}^n$ with the metric \begin{equation*}
d(E,F)\coloneqq\max\Bigl\{\adjustlimits{\sup}_{x\in B_E}{\inf}_{y\in B_F} \norm{x-y}_2,\adjustlimits{\sup}_{y\in B_F}{\inf}_{x\in B_E} \norm{x-y}_2\Bigr\},\qquad  E,F\in \G_k^n,
\end{equation*} where $B_E$ and $B_F$ stand for the Euclidean unit balls in $E$ and $F$, respectively. The Borel $\sigma$-field on $\G_k^n$ induced by this metric is denoted by $\borel(\G_k^{n})$ and we supply the arising measurable space $\G_k^n$ with the unique Haar probability measure $\eta_k^n$. It can be identified with the image measure of the Haar probability measure $\tilde{\eta}$ on $\mathbb{SO}(n)$ under the mapping $\mathbb{SO}(n)\to\G_k^n,\, T\mapsto TE_0$ with $E_0\coloneqq\mathrm{span}(\{e_1,\ldots,e_k\})$. Here, we write $e_1\coloneqq(1,0,\ldots,0),e_2\coloneqq(0,1,0,\ldots,0),\ldots,e_n\coloneqq(0,\ldots,0,1)\in\R^n$ for the standard orthonormal basis in $\R^n$ and $\mathrm{span}(\{e_1,\ldots,e_k\})\in \G_k^n$, $k\in\{1,\ldots,n\}$, for the $k$-dimensional linear subspace spanned by the first $k$ vectors of this basis.

\subsection{Large deviation principles}\label{subsec24:LargeDeviations}

Consider a sequence \((X_n)_{n\in\N}\) of i.i.d integrable real random variables and let
\begin{equation*}
	S_n\coloneqq\oneover{n}\sum_{i=1}^n X_i
\end{equation*}
be the empirical average of the first \(n\) random variables of the sequence. It is well known that the law of large numbers provides the asymptotic behaviour of \(S_n\), as \(n\) tends to infinity. In particular, the strong law of large numbers says that
\begin{equation*}
	S_n\asconv\Ex[X_1].
\end{equation*}
If \(X_1\) has also positive and finite variance, then the classical central limit theorem states that the fluctuations of \(S_n\) around \(\Ex[X_1]\) are normal and of scale \(1/\sqrt{n}\). More precisely,
\begin{equation*}
	\sqrt{n}(S_n-\Ex[X_1])\dconv \mathcal{N}(0,\Var[X_1]).
\end{equation*}
One of the important features of the central limit theorem is its universality, i.e., that the limiting distribution is normal independently of the precise distribution of the summands \(X_1,X_2,\ldots\). This allows to have a good estimate for probabilities of the kind
\begin{equation*}
\Pr(S_n>x),\qquad x\in\R,
\end{equation*}
when \(n\) is large, but fixed.
However, such estimate can be quite imprecise if \(x\) is much larger than \(\Ex[X_1]\). Moreover, it does not provide any rate of convergence for such tail probabilities as \(n\) tends to infinity for fixed \(x\). 

In typical situations, if $S_n$ arises as a sum of $n$ independent random variables $X_1,\ldots,X_n$ with finite exponential moments, say, one has that
\begin{equation*}
\Pr(S_n>x)\approx e^{-n \rate(x)}, \qquad x>\Ex[X_1]
\end{equation*}
if $n\to\infty$, where $\rate$ is the so-called rate function. Here \(\approx\) expresses an asymptotic equivalence up to sub-exponential functions of \(n\). For concreteness, let us consider two examples. If \(\Pr(X_1=1)=\Pr(X_1=0)=1/2\), then
\begin{equation*}
\rate(x) =\begin{cases}
x\log x +(1-x)\log(1-x)+\log 2 & \caseif x\in[0,1],\\
+\infty &\other,
\end{cases}
\end{equation*}
which describes the upper large deviations.
If on the other hand $X_1\sim\mathcal N(0,\sigma^2)$, then the rate function is given by
\begin{equation*}
\rate(x) = \frac{x^2}{2\sigma^2}, \qquad x\in\R.
\end{equation*}
Contrarily to the universality shown in the central limit theorem, these two examples already underline that the function \(\rate\) and thus the decay of the tail probabilities is much more sensitive and specific to the distribution of \(X_1\). 

The study of the atypical situations (in contrast to the typical ones described in the laws of large numbers and the central limit theorem) is called Large Deviations Theory. The concept expressed heuristically in the examples above can be made formal in the following way. Let $\X\coloneqq(X_n)_{n\in\N}$ be a sequence of random vectors taking values in $\R^d$. Further, let $s\colon\N\to[0,\infty]$ be a non-negative sequence such that $s(n)\uparrow \infty$ and assume that  $\rate\colon\R^d\to[0,\infty]$ is a lower semi-continuous function, i.e., all of its lower level sets $\{x\in\R^d:\rate(x) \leq\ell \}$, $\ell\in[0,\infty]$, are closed. We say that \(\X\) satisfies a large deviation principle (or simply LDP) with speed $s(n)$ and rate function \(\rate\) if and only if
\begin{equation*}\label{eq:LDPdefinition}
-\inf_{x\in A^\circ}\rate(x) \leq\,\liminf_{n\to\infty}\;\!\oneover {s(n)}\log \Pr(X_n\in A)
\leq\limsup_{n\to\infty}\oneover {s(n)}\log\Pr(X_n\in A)\leq-\inf_{x\in\overline{A}}\rate(x)
\end{equation*}
for all $A\in\borel(\R^d)$. Moreover, \(\rate\) is said to be a good rate function if all of its lower level sets are compact. The latter property is essential to guarantee the so-called exponential tightness of the sequence of measures.

The following result, known as Cramér's Theorem, guarantees an LDP for the empirical average of a sequence of i.i.d. random vectors, provided that their common distribution is sufficiently nice (see, e.g. \cite[Theorem 27.5]{Kallenberg}).

\begin{theorem}[Cramér's Theorem]\label{thm:Cramér}
	Let \((X_n)_{n\in\N}\) be a sequence of i.i.d. random vectors in \(\R^d\) such that the cumulant generating function of \(X_1\),
	\begin{equation*}
	\Lambda(u)\coloneqq \log\Ex\bigl[\exp\scalar{X_1}{u}\bigr]\,,\qquad u\in\R^d,
	\end{equation*}
	is finite in a neighbourhood of \(0\in\R^d\). Let \(\mathbf{S}\coloneqq(\oneover{n}\sum_{i=1}^n X_i)_{n\in\N}\) be the sequence of the sample means. Then \(\mathbf{S}\) satisfies an LDP with speed \(n\) and good rate function \(\rate=\Lambda^*\), where
	\begin{equation*}
	\Lambda^*(x)\coloneqq\sup_{u\in\R^d}\bigl(\scalar{x}{u}-\Lambda(u) \bigr),\qquad x\in\R^d,
	\end{equation*}
	is the Fenchel-Legendre transform of \(\Lambda \). 
\end{theorem}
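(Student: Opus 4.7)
The plan is to establish the two halves of the LDP separately (upper bound for closed sets, lower bound for open sets), after first harvesting the properties of \(\Lambda\) and \(\Lambda^*\) that the hypothesis buys us. Because \(\Lambda\) is finite on a neighbourhood of the origin, it is convex, lower semi-continuous on \(\R^d\), and differentiable (in fact analytic) at \(0\); its Fenchel--Legendre transform \(\Lambda^*\) is then automatically convex, non-negative, lower semi-continuous, and attains its minimum value \(0\) at \(\Ex[X_1]\). Goodness of \(\Lambda^*\) follows from the fact that \(\Lambda(u)<\infty\) on a neighbourhood of \(0\): for any \(\ell\ge0\), the inequality \(\Lambda^*(x)\ge \scalar{x}{u}-\Lambda(u)\) for all \(u\) forces \(\{\Lambda^*\le\ell\}\) to be bounded in every direction, and lower semi-continuity closes it.

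For the upper bound I would begin with the exponential Chebyshev inequality, which for every \(u\in\R^d\) and every closed half-space \(H_{u,a}\coloneqq\{x:\scalar{x}{u}\ge a\}\) gives
\begin{equation*}
\Pr(S_n\in H_{u,a})\le\exp\bigl(-n(\scalar{u}{a}-\Lambda(u))\bigr),
\end{equation*}
by independence and the factorisation \(\Ex[\exp\scalar{nS_n}{u}]=\exp(n\Lambda(u))\). Optimising over \(u\) delivers the bound for half-spaces, and convexity of \(\Lambda^*\) promotes it to arbitrary closed convex sets via the supporting hyperplane theorem. The remaining step is to remove the convexity assumption, which is where exponential tightness enters: the hypothesis that \(\Lambda\) is finite near \(0\) ensures that for every \(\alpha>0\) there exists a compact \(K_\alpha\subset\R^d\) with \(\limsup_n n^{-1}\log\Pr(S_n\notin K_\alpha)\le -\alpha\), and combining this with the half-space estimate (via a finite cover of the compact part of the closed set by half-spaces slightly separated from the set) yields the upper bound for arbitrary closed sets.

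The lower bound is the more delicate half and is where I expect to spend the most effort. The classical device is the Cramér tilt: for any \(x\) in the interior of the effective domain of \(\Lambda^*\) for which the supremum defining \(\Lambda^*(x)\) is attained at some \(u^*=u^*(x)\) with \(\nabla\Lambda(u^*)=x\), introduce the tilted law \(\de\widetilde\Pr/\de\Pr=\exp(\scalar{u^*}{X_1}-\Lambda(u^*))\) on the one-step space, under which the \(X_i\) remain i.i.d.\ with mean \(x\). A direct computation shows that for any small ball \(B_\delta(x)\),
\begin{equation*}
\Pr(S_n\in B_\delta(x))=\exp(-n\Lambda^*(x))\;\widetilde\Ex\!\left[\exp\!\bigl(-n\scalar{u^*}{S_n-x}\bigr)\indic_{\{S_n\in B_\delta(x)\}}\right],
\end{equation*}
and by the weak law of large numbers under \(\widetilde\Pr\) the expectation inside is \(\ge e^{-n\delta\norm{u^*}}(1-o(1))\). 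Letting \(\delta\downarrow0\) after \(n\to\infty\) gives the lower bound at \(x\), and the lower bound for a general open set \(G\) follows by taking the supremum over such \(x\in G\).

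The genuine obstacle in the lower bound is extending this argument to points \(x\) where the supremum in \(\Lambda^*(x)\) is not attained in the interior of the domain of \(\Lambda\) (exposed boundary points of the effective domain of \(\Lambda^*\)). The standard remedy is an approximation: such an \(x\) is a limit of interior points \(x_k\to x\) along which one can show \(\Lambda^*(x_k)\to\Lambda^*(x)\), by convexity together with lower semi-continuity. A careful inspection (using the convex analytic fact that a proper convex function is continuous on the relative interior of its domain and lower semi-continuous on the closure) is enough to close this gap and complete the matching lower bound, yielding the LDP with speed \(n\) and good rate function \(\Lambda^*\).
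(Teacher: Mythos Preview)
The paper does not prove Cram\'er's Theorem; it merely states the result and refers the reader to \cite[Theorem~27.5]{Kallenberg}. There is therefore no proof in the paper to compare your proposal against.

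That said, your outline follows the standard textbook route (exponential Chebyshev and covering for the upper bound, exponential tilting for the lower bound, approximation of non-exposed points), and the overall strategy is correct. Two small points are worth tightening. First, in your displayed Chebyshev bound the exponent \(\scalar{u}{a}\) is ill-typed since \(a\) is a scalar; the correct expression is \(\exp\bigl(-n(a-\Lambda(u))\bigr)\). Second, your passage from half-spaces to arbitrary closed sets is stated a bit loosely: the usual order is first to prove the upper bound for \emph{compact} sets by a finite half-space cover, and only then invoke exponential tightness to pass to general closed sets---not the other way round. The genuinely delicate step, as you correctly flag, is the lower bound at non-exposed points of the domain of \(\Lambda^*\); your sketch (``approximate by exposed interior points along which \(\Lambda^*\) is continuous'') is the right idea but in \(\R^d\) this requires some care with the relative interior of the convex hull of the support, and is where most of the technical work in a full proof actually lives.
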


	Cramér's Theorem is a fundamental tool that allows to prove an LDP if the random variables of interest can be transformed into a sum of independent random variables.
	
	Sometimes there is the need to `transport' a large deviation principle from one space to another by means of a continuous function. This can be done with a device known as the contraction principle and we refer to \cite[Theorem 4.2.1]{DZ} or \cite[Theorem 27.11(i)]{Kallenberg}.

	\begin{proposition}[Contraction principle]\label{prop:contraction principle}
Let $d_1,d_2\in\N$ and let $F:\R^{d_1}\to\R^{d_2}$ be a continuous function. Further, let $\XX\coloneqq(X_n)_{n\in\N}$ be a sequence of $\R^{d_1}\text{-valued}$ random vectors that satisfies an LDP with speed $s(n)$ and rate function $\rate_\XX$. Then the sequence $\mathbf{Y}\coloneqq(F(X_n))_{n\in\N}$ of $\R^{d_2}\text{-valued}$ random vectors satisfies an LDP with the same speed and with good rate function $\rate_\mathbf{Y}=\rate_\XX\circ F^{-1}$, i.e., $\rate_\mathbf{Y}(y)\coloneqq\inf\{\rate_\XX(x):F(x)=y\}$, $y\in\R^{d_2}$, with the convention that $\rate_\mathbf{Y}(y)=+\infty$ if $F^{-1}(\{y\})=\emptyset$.
	\end{proposition}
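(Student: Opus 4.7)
The plan is to derive the LDP for $\mathbf{Y}$ directly from the LDP for $\XX$ using the continuity of $F$ together with the set identity $\{F(X_n)\in A\}=\{X_n\in F^{-1}(A)\}$. The whole proof is essentially bookkeeping; the only subtle point is verifying that $\rate_\mathbf{Y}$ is a \emph{good} rate function, which is where the hypothesis that $\rate_\XX$ is good (implicit in the standard statement, cf.\ \cite[Theorem 4.2.1]{DZ}) is used.

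First I would check that $\rate_\mathbf{Y}$ is a good rate function. The key observation is the set equality
\begin{equation*}
\{y\in\R^{d_2}:\rate_\mathbf{Y}(y)\le\ell\}=F\bigl(\{x\in\R^{d_1}:\rate_\XX(x)\le\ell\}\bigr),\qquad\ell\in[0,\infty).
\end{equation*}
The inclusion $\supseteq$ is immediate from the definition of $\rate_\mathbf{Y}$. For $\subseteq$, if $\rate_\mathbf{Y}(y)\le\ell$, then for every $\varepsilon>0$ the set $F^{-1}(\{y\})\cap\{\rate_\XX\le\ell+\varepsilon\}$ is the intersection of a closed set (since $F$ is continuous) with a compact set (since $\rate_\XX$ is good), hence compact and non-empty. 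The nested intersection as $\varepsilon\downarrow 0$ is therefore non-empty and yields an $x$ with $F(x)=y$ and $\rate_\XX(x)\le\ell$. Since $\{\rate_\XX\le\ell\}$ is compact and $F$ is continuous, its image is compact, so $\rate_\mathbf{Y}$ has compact level sets; this also yields lower semi-continuity.

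Next I would establish the upper bound. For any closed $C\subseteq\R^{d_2}$, continuity of $F$ makes $F^{-1}(C)$ closed in $\R^{d_1}$, so the LDP upper bound for $\XX$ gives
\begin{equation*}
\limsup_{n\to\infty}\oneover{s(n)}\log\Pr(F(X_n)\in C)\le -\inf_{x\in F^{-1}(C)}\rate_\XX(x)=-\inf_{y\in C}\inf_{x\in F^{-1}(\{y\})}\rate_\XX(x)=-\inf_{y\in C}\rate_\mathbf{Y}(y),
\end{equation*}
where the middle equality just partitions $F^{-1}(C)$ according to the value $y=F(x)$.

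The lower bound is completely analogous: for any open $O\subseteq\R^{d_2}$, $F^{-1}(O)$ is open, so the LDP lower bound for $\XX$ yields $\liminf \frac{1}{s(n)}\log\Pr(F(X_n)\in O)\ge -\inf_{x\in F^{-1}(O)}\rate_\XX(x)=-\inf_{y\in O}\rate_\mathbf{Y}(y)$ by the same partition. Combining the two bounds gives the LDP for $\mathbf{Y}$ on every Borel set. The main (and really only) conceptual point is the compact-level-set argument for goodness; everything else reduces to applying the definitions and the continuity of $F$.
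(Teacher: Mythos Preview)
The paper does not supply its own proof of this proposition; it merely cites \cite[Theorem 4.2.1]{DZ} and \cite[Theorem 27.11(i)]{Kallenberg}. Your argument is correct and is essentially the standard proof one finds in those references: the LDP bounds transfer via $\{F(X_n)\in A\}=\{X_n\in F^{-1}(A)\}$ together with the fact that preimages of closed/open sets under a continuous map are closed/open, and goodness of $\rate_\mathbf{Y}$ follows from the compact-level-set identity you establish. Your explicit observation that goodness of $\rate_\XX$ is being tacitly assumed (the proposition as stated only says ``rate function'') is well taken---without it the infimum defining $\rate_\mathbf{Y}$ need not be attained and the level-set equality can fail.
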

	
While this form of the contraction principle is sufficient to analyse the large deviation behavior for $1$-dimensional random projections of $\ell_p^n$-balls, a refinement to treat the higher-dimensional cases is needed. To handle this situation, the classical contraction principle can be extended to allow a dependency on $n$ of the continuous function \(F\). We refer the interested reader to \cite[Corollary 4.2.21]{DZ} for the precise statement.

\section{Probability measures on convex bodies}\label{sec3:PMonConvexBodies}

There is a variety of probability measures that can be defined on the family of $\ell_p^n$-balls or spheres. We shall present some of them and their key properties below.

\subsection{Probability measures on an \(\ell_p^n\,\text{-ball}\)}\label{subsec31:PMonLpBalls}
	
One can endow \(\pball\) with a natural volume probability measure. This is defined as follows, 
	\begin{equation}
	\label{eq:puniform}
		\nu_p^n(A)\coloneqq\frac{\abs{A\cap\pball}}{\abs{\pball}},
	\end{equation}
	for any \(A\in\borel(\R^n)\).
	We also refer to \(\nu_p^n\) as the uniform distribution on \(\pball \). 
	
As far as \(\psphere\) is concerned, there are two probability measures that are of particular interest. The first is the so-called surface measure, which we denote by \(\sigma_p^n\), and which is defined as the normalised $(n-1)$-dimensional Hausdorff measure. The second, \(\mu_p^n\), is the so-called cone (probability) measure and is defined via
	\begin{equation}
	\label{eq:pconemeasure}
		\mu_p^n(A)\coloneqq\frac{\vol{[0,1]A}}{\vol{\pball}},\qquad A\in\borel(\psphere).
	\end{equation}
In other words, \(\mu_p^n(A)\) is the normalised volume of the cone that intersects \(\psphere \) in \(A\), intersected with \(\pball \). The cone measure is known to be the unique measure that satisfies the following polar integration formula for any integrable function \(f\) on \(\R^n\) (see, e.g., \cite[Proposition 1]{NR})
	\begin{equation}
	\label{eq:polarintegration}
		\int_{\R^n} f(x)\de x= n\,\vol{\pball}\int_0^\infty r^{n-1}\int_{\psphere} f(rz)\de\mu_p^n(z)\de r.
	\end{equation}
In particular, whenever \(f\) is \(p\text{-radial}\), i.e., there exists a function \(g\) defined on \(\R^+\) such that \(f(x)=g(\pnorm{x})\), then
	\begin{equation}
	\label{eq:pradial}
		\int_{\R^n}g(\pnorm{x})\de x= n\,\vol{\pball} \int_0^\infty r^{n-1}g(r)\de r.
	\end{equation}
The relation between \(\sigma_p^n\) and \(\mu_p^n\) has been deeply investigated. It is known, for example, that they coincide whenever \(p\in\{1,2,\infty\}\) (see, e.g., \cite{RR}). In the other cases, Naor \cite{N} provided a bound on the total variation distance of these two measures.
	\begin{proposition}
	Let \(\sigma_p^n\) and \(\mu_p^n\) be the surface probability and cone probability measure on \(\psphere \), respectively. Then
		\begin{align*}
			d_{\mathrm{TV}}(\sigma_p^n,\mu_p^n)&\coloneqq\sup\Big\{\abs{\sigma_p^n(A)-\mu_p^n(A)}:A\in\borel(\psphere)\Big\}
			\le C\Bigl(1-\oneover{p}\Bigr)\abs[\bigg]{1-\frac{2}{p}}\frac{\sqrt{np}}{n+p},
		\end{align*}
	where \(C\in(0,\infty)\) is an absolute constant.
	\end{proposition}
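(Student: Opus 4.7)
The plan is to write down the Radon-Nikodym density $h\coloneqq\de\sigma_p^n/\de\mu_p^n$ explicitly on the smooth part of $\psphere$, reduce the total variation distance to an $L^1$-deviation of $h$ from $1$, and then control this via a variance estimate for a concrete function on the sphere. On the full-measure set where no coordinate of $x\in\psphere$ vanishes, the outer Euclidean unit normal to the level set $\{\pnorm{\,\cdot\,}=1\}$ is $\nu(x)=\bigl(|x_i|^{p-1}\mathrm{sgn}(x_i)\bigr)_{i=1}^n/\varphi(x)$, where $\varphi(x)\coloneqq\bigl(\sum_{i=1}^n|x_i|^{2p-2}\bigr)^{1/2}$, so that $\scalar{x}{\nu(x)}=\pnorm{x}^p/\varphi(x)=1/\varphi(x)$. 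The standard relation $\de\mu_p^n\propto\scalar{x}{\nu(x)}\,\de\mathcal{H}^{n-1}$ on $\partial\pball$, combined with the fact that both $\sigma_p^n$ and $\mu_p^n$ are probability measures, then yields
\begin{equation*}
h(x)=\frac{\varphi(x)}{\Ex_{\mu_p^n}[\varphi]}.
\end{equation*}

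By the standard formula for the total variation distance between two probability measures with a density,
\begin{equation*}
d_{\mathrm{TV}}(\sigma_p^n,\mu_p^n)=\tfrac{1}{2}\,\Ex_{\mu_p^n}|h-1|=\frac{\Ex_{\mu_p^n}\abs{\varphi-\Ex_{\mu_p^n}[\varphi]}}{2\,\Ex_{\mu_p^n}[\varphi]}\le\frac{\sqrt{\Var_{\mu_p^n}(\varphi)}}{2\,\Ex_{\mu_p^n}[\varphi]}
\end{equation*}
by the Cauchy-Schwarz inequality, so it suffices to bound $\sqrt{\Var_{\mu_p^n}(\varphi)}/\Ex_{\mu_p^n}[\varphi]$. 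To compute moments of $\varphi$ under $\mu_p^n$ I would use the Schechtman-Zinn probabilistic representation of the cone measure (which reappears in later sections of this survey): if $Y_1,\ldots,Y_n$ are i.i.d.\ with density proportional to $e^{-|t|^p/p}$ and $Z\coloneqq(\sum_{i=1}^n|Y_i|^p)^{1/p}$, then $(Y_1,\ldots,Y_n)/Z\sim\mu_p^n$ and this vector is independent of $Z$. Independence yields
\begin{equation*}
\Ex_{\mu_p^n}[\varphi^k]=\frac{\Ex\bigl[(\sum_i|Y_i|^{2p-2})^{k/2}\bigr]}{\Ex[Z^{k(p-1)}]},\qquad k>0,
\end{equation*}
and using $|Y_i|^p\sim\Gamma(1/p,1/p)$ and $Z^p\sim\Gamma(n/p,1/p)$ the case $k=2$ reduces to an explicit product of Gamma functions.

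The main obstacle is extracting the prefactor $(1-1/p)\abs{1-2/p}$ from the Gamma-function estimate of $\Var_{\mu_p^n}(\varphi)/\Ex_{\mu_p^n}[\varphi]^2$: this factor must vanish at $p\in\{1,2\}$ to be consistent with the coincidence $\sigma_p^n=\mu_p^n$ there, and must stay bounded as $p\to\infty$. My approach would be to first evaluate $\Var_{\mu_p^n}(\varphi^2)=\sum_{i,j}\Cov_{\mu_p^n}(|x_i|^{2p-2},|x_j|^{2p-2})$ exactly via the representation above, and then transfer the estimate to $\varphi$ itself using the pointwise inequality $\abs{\varphi-\Ex_{\mu_p^n}\varphi}\le\abs{\varphi^2-(\Ex_{\mu_p^n}\varphi)^2}/\Ex_{\mu_p^n}\varphi$. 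Repeated use of the functional equation $\Gamma(x+1)=x\Gamma(x)$ naturally produces the factors $(1-1/p)$ and $(1-2/p)$ from the shifts $\Gamma(2-1/p)=(1-1/p)\Gamma(1-1/p)$ and $\Gamma(3-2/p)=(2-2/p)(1-2/p)\Gamma(1-2/p)$, and these survive as $(1-1/p)\abs{1-2/p}$ after taking a square root. A careful asymptotic analysis of the remaining ratio of Gamma functions, whose dominant behaviour in $n$ and $p$ contributes the factor $\sqrt{np}/(n+p)$, completes the proof; the details are carried out in \cite{N}.
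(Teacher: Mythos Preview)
The paper does not actually prove this proposition: it is stated as Naor's result with a bare citation to \cite{N}, and no argument is given. Your sketch is therefore not competing with anything in the paper itself. What you have written is essentially a faithful outline of Naor's original proof: identify the Radon--Nikodym density $\de\sigma_p^n/\de\mu_p^n$ as $\varphi/\Ex_{\mu_p^n}[\varphi]$ with $\varphi(x)=\bigl(\sum_i|x_i|^{2p-2}\bigr)^{1/2}=\|x\|_{2p-2}^{p-1}$, bound the total variation by a normalized standard deviation via Cauchy--Schwarz, and evaluate the relevant moments through the Schechtman--Zinn representation (\Cref{thm:cone}) and Gamma-function identities. The factors $(1-1/p)$ and $|1-2/p|$ indeed emerge from the shifts in the Gamma functions, and the $\sqrt{np}/(n+p)$ from Stirling-type asymptotics of the remaining ratios. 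One small point: your two reduction steps (first Cauchy--Schwarz to $\sqrt{\Var(\varphi)}$, then the pointwise bound $|\varphi-\Ex\varphi|\le|\varphi^2-(\Ex\varphi)^2|/\Ex\varphi$) are alternative routes rather than successive ones, and the latter involves $(\Ex\varphi)^2$ rather than $\Ex[\varphi^2]$, so it does not lead directly to $\Var(\varphi^2)$; Naor's argument handles this by centering at $\Ex[\varphi^2]$ instead. Since you already defer the details to \cite{N}, this is a minor expository wrinkle rather than a gap.
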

In particular, the above proposition ensures that for \(p\) fixed, such a distance decreases to \(0\) not slower than \(n^{-1/2}\).
	
An important feature of the cone measure is described by the following probabilistic representation, due to Schechtman and Zinn \cite{SZ1} (independently discovered by Rachev and R\"uschendorf \cite{RR}). We will below present a proof in a more general set-up.

	\begin{theorem}
	\label{thm:cone}
	Let \(n\in\N\) and \(p\in[1,\infty]\). Let \((Z_i)_{i\in\N}\) be independent and \(p\text{-generalized}\) Gaussian random variables, meaning absolutely continuous w.r.t. to the Lebesgue measure on $\R$ with density
		\begin{equation}
		\label{eq:pGauss}
			f_p(x)\coloneqq
				\begin{dcases}
					\oneover{2p^{1/p}\Gamma(1+1/p)}e^{-\abs{x}^p/p}&\caseif p\in[1,\infty)\,,\\
					\oneover{2}\indic_{[0,1]}(\abs{x})&\caseif p=\infty.
				\end{dcases}
		\end{equation}
	Consider the random vector \(Z\coloneqq(Z_1,\ldots,Z_n)\in\R^n\) and let $U\sim{\rm Unif}([0,1])$ be independent of $Z_1,\ldots,Z_n$. Then
		\begin{equation*}
			\frac{Z}{\pnorm{Z}}\sim\mu_p^n\qquad\text{and}\qquad U^{1/n}\frac{Z}{\pnorm{Z}}\sim\nu_p^n.
		\end{equation*}
	Moreover, \(Z/\pnorm{Z}\) is independent of \(\pnorm{Z}\).
	\end{theorem}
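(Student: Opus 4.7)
The plan is to exploit the polar integration formula \eqref{eq:polarintegration} together with the observation that the joint density of $Z=(Z_1,\ldots,Z_n)$ is $\pradial$. Indeed, for $p\in[1,\infty)$ one has
\[
\prod_{i=1}^n f_p(x_i)=C_p^n\exp\bigl(-\pnorm{x}^p/p\bigr),\qquad C_p\coloneqq\oneover{2p^{1/p}\Gamma(1+1/p)},
\]
so this density depends on $x$ only through $\pnorm{x}$; for $p=\infty$ the density equals $2^{-n}\indic_{\pball}(x)$, which is again a function of $\pnorm{x}$ alone, and the arguments below go through identically.

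First I would compute the joint law of $\pnorm{Z}$ and $Z/\pnorm{Z}$. For bounded measurable $g\colon\psphere\to\R$ and $h\colon[0,\infty)\to\R$, I apply \eqref{eq:polarintegration} to $f(x)=g(x/\pnorm{x})\,h(\pnorm{x})\exp(-\pnorm{x}^p/p)$. Using that $rz/\pnorm{rz}=z$ for $r>0$ and $z\in\psphere$, this yields
\[
\Ex\bigl[g(Z/\pnorm{Z})\,h(\pnorm{Z})\bigr]=C_p^n\,n\,\vol{\pball}\Bigl(\int_{\psphere}g\de\mu_p^n\Bigr)\Bigl(\int_0^\infty h(r)\,r^{n-1}e^{-r^p/p}\de r\Bigr).
\]
The factorization into a functional of $g$ times a functional of $h$ is equivalent to the independence of $Z/\pnorm{Z}$ and $\pnorm{Z}$, and specializing $h\equiv 1$ and $g\equiv 1$ identifies the two marginals; in particular $Z/\pnorm{Z}\sim\mu_p^n$, which takes care of the first and third claims.

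For the remaining identity, I condition on $U$ and use the marginal distribution just obtained, noting that $U^{1/n}$ has density $v\mapsto nv^{n-1}\indic_{[0,1]}(v)$:
\[
\Ex\bigl[\phi(U^{1/n}Z/\pnorm{Z})\bigr]=n\int_0^1 v^{n-1}\int_{\psphere}\phi(vz)\de\mu_p^n(z)\de v
\]
for any bounded measurable $\phi\colon\pball\to\R$. On the other hand, applying \eqref{eq:polarintegration} to $f(x)=\phi(x)\indic_{\pball}(x)$ and observing that $rz\in\pball\iff r\le 1$ for $z\in\psphere$ gives
\[
\oneover{\vol{\pball}}\int_{\pball}\phi(x)\de x=n\int_0^1 r^{n-1}\int_{\psphere}\phi(rz)\de\mu_p^n(z)\de r,
\]
which coincides with the previous display. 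Hence $U^{1/n}Z/\pnorm{Z}\sim\nu_p^n$.

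The only genuine obstacle is conceptual: the specific exponent $\pnorm{x}^p/p$ appearing in \eqref{eq:pGauss} is engineered precisely so that the product density $\prod_i f_p(x_i)$ is $\pradial$. Once this observation is in place, all three claims reduce to two mechanical applications of the polar integration formula together with the elementary fact that a product-type factorization of joint expectations characterizes independence.
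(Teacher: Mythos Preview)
Your proof is correct and follows essentially the same approach as the paper: the paper proves the more general \Cref{prop:SZGeneralK} for arbitrary symmetric convex bodies by applying the polar integration formula to a product of a radial and an angular test function, reading off independence and the cone-measure marginal from the resulting factorization, and then invoking the density $r\mapsto nr^{n-1}$ of $U^{1/n}$ for the uniform part. Your argument is the specialization of this to $K=\pball$, with the uniform-distribution step spelled out via a second application of \eqref{eq:polarintegration} rather than left implicit.
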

	
It is worth noticing that in \cite{SZ1} the density used by the authors for \(Z_1\) is actually proportional to \( x\mapsto\exp(-\abs{x}^p)\). As will become clear later, this difference is irrelevant as far as the conclusion of the theorem is concerned.

Indeed, although the statement of \Cref{thm:cone} reflects the focus of this survey on the \(\ell_p^n\text{-balls} \) and the literature on the topic, its result is not strictly dependent on the particular choice of \(f_p\) in \Cref{eq:pGauss}. In fact, it is not even a prerogative of the \(\ell_p^n\text{-balls} \), as subsequently explained in \Cref{prop:SZGeneralK}.

\subsection{The cone measure on a symmetric convex body}\label{subsec32:ConeMeasureSymmConvBody}
	
Consider a symmetric convex body \(K\subseteq\R^n\), meaning that if \(x\in K\) then also \(-x\in K\). Define the functional \(\norm{\cdot}_K\colon\R^n\to[0,\infty) \) by
	\begin{equation*}
		\norm{x}_K\coloneqq\inf\{r>0:x\in r K\}.
	\end{equation*}
 The functional \(\norm{\cdot}_K\) is known as the Minkowski functional associated with \(K\) and, under the aforementioned conditions on \(K\), defines a norm on \(\R^n \). We will also say that \(\norm{x}_K \) is the \(K\text{-norm}\) of the vector \(x\in\R^n\). Whenever a function on \(\R^n\) is dependent only on \(\norm{\cdot}_K \), we say that it is a \(K\text{-radial}\) function. Analogously, we call a probability measure \(\Kradial\)  when its distribution function is \(\Kradial\). We will also write \(\pradial \) meaning \(\pball\text{-radial} \). 
 
 In analogy with Equations \eqref{eq:puniform} and \eqref{eq:pconemeasure}, it is possible to define a uniform probability measure \(\nu_K\) on \(K\) and a cone measure \(\mu_K \) on \(\partial K \), respectively, as
 	\begin{equation*}
 		\nu_K(A)\coloneqq\frac{\vol{A\cap K}}{\vol{K}}\qquad\text{and}\qquad
 		\mu_K(B)\coloneqq\frac{\vol{[0,1]B}}{\vol{K}},
 	\end{equation*}
for any \(A\in\borel(\R^n)\) and \(B\in\borel(\partial K)\). 

Note that \(\mu_K\), as a ratio of volumes, is invariant under a simultaneous transformation of both the numerator and the denominator. In particular, for any \(I\in\borel(\R^+)\), such that \(\vol{I}>0 \), it holds
	\begin{equation}
	\label{eq:anyI}
		\mu_K(B)=\frac{\vol{IB}}{\vol{I\partial K}},
	\end{equation}
for any \(B\in\borel(\partial K)\) (note that \(K=[0,1]\partial K\)).
This fact will be used in the proof of the following generalization of \Cref{thm:cone} to arbitrary symmetric convex bodies.

\begin{proposition}\label{prop:SZGeneralK}
 		Let \(K\subseteq\R^n\) be a symmetric convex body. Suppose that there exists a continuous function  \(f\colon[0,\infty)\to[0,\infty)\) with the property \(\int_{\R^n} f(\norm{x}_K)\de x=1\)\, such that the distribution of a random vector \(Z\) on \(\R^n\) is given by
 			\begin{equation*}
 					\Pr(Z\in A)=\int_A f(\norm{x}_K)\de x,
 			\end{equation*}
 		for any \(A\in\borel(\R^n)\). Also, let $U\sim{\rm Unif}([0,1])$ be independent of $Z$. Then,
	\begin{equation}
	\label{eq:Kcone}
		\frac{Z}{\norm{Z}_K}\sim\mu_K\qquad\text{and}\qquad U^{1/n}\frac{Z}{\norm{Z}_K}\sim\nu_K.
	\end{equation}
In addition, \(Z/\norm{Z}_K\) is independent of \(\norm{Z}_K\). 
\end{proposition}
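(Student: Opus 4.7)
The plan is to derive a $K$-analog of the polar integration formula \eqref{eq:polarintegration} and then extract all three conclusions from a single joint-density computation. The starting point is the scaling invariance \eqref{eq:anyI}: applied with $I = [0,r]$ for $r > 0$ it yields $\vol{[0,r]B} = r^n \vol{K} \mu_K(B)$ for every $B \in \borel(\partial K)$. A monotone-class argument, extending from indicators of the form $\indic_{[a,b]B}$ to all nonnegative Borel functions, then produces
\begin{equation*}
\int_{\R^n} h(x) \de x = n \vol{K} \int_0^\infty r^{n-1} \int_{\partial K} h(rz) \de \mu_K(z) \de r.
\end{equation*}

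With this in hand, I would compute the joint law of $(\norm{Z}_K, Z/\norm{Z}_K)$. For $B \in \borel(\partial K)$ and $0 \le a < b$, applying the polar formula to $h(x) = \indic_{[a,b]B}(x) f(\norm{x}_K)$ and using that $\norm{rz}_K = r$ for $z \in \partial K$ yields
\begin{equation*}
\Pr\bigl(\norm{Z}_K \in [a,b],\, Z/\norm{Z}_K \in B\bigr) = \biggl( n \vol{K} \int_a^b r^{n-1} f(r) \de r \biggr)\, \mu_K(B).
\end{equation*}
This product factorization simultaneously establishes the independence of $\norm{Z}_K$ and $Z/\norm{Z}_K$, and, taking $[a,b] = [0,\infty)$ together with the normalization $\int_{\R^n} f(\norm{x}_K) \de x = 1$, identifies $Z/\norm{Z}_K \sim \mu_K$.

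For the second identity in \eqref{eq:Kcone}, set $Y \coloneqq U^{1/n} Z/\norm{Z}_K$. By \eqref{eq:Gammaproperty2}, $U^{1/n}$ has density $n r^{n-1} \indic_{[0,1]}(r)$ and, by the assumed independence of $U$ from $Z$ together with the previous step, it is independent of $Z/\norm{Z}_K \sim \mu_K$. Running the polar formula in the opposite direction, and using that $\indic_K(rz) = \indic_{[0,1]}(r)$ for $z \in \partial K$, one obtains for every $A \in \borel(\R^n)$
\begin{equation*}
\Pr(Y \in A) = \int_0^1 n r^{n-1} \int_{\partial K} \indic_A(rz) \de \mu_K(z) \de r = \frac{1}{\vol{K}} \int_K \indic_A(x) \de x = \nu_K(A),
\end{equation*}
which completes the proof. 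The only genuinely technical step is the first one, namely the derivation of the polar disintegration from \eqref{eq:anyI}; everything afterwards is routine manipulation of product measures. A minor point of care is the measurability of the map $x \mapsto (\norm{x}_K, x/\norm{x}_K)$ on $\R^n \setminus \{0\}$, which is immediate from the continuity of the Minkowski functional $\norm{\cdot}_K$.
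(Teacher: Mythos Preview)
Your proposal is correct and follows essentially the same route as the paper: first establish the polar integration formula for $K$ (the paper does this just before the proof as \eqref{eq:PolarInterationGeneralK}, also via indicators of sets $(a,b)E$), then use it to factorize the joint law of $(\norm{Z}_K, Z/\norm{Z}_K)$, read off independence and the cone-measure marginal, and finally invoke the density $nr^{n-1}$ of $U^{1/n}$ for the uniform part. The only cosmetic difference is that the paper tests against products $\phi(Z/\norm{Z}_K)\psi(\norm{Z}_K)$ of general nonnegative functions rather than indicators of rectangles $[a,b]\times B$, which amounts to the same thing.
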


The proof of \Cref{prop:SZGeneralK} is based on the following polar integration formula, which generalizes \Cref{eq:polarintegration}. It says that for measurable functions $h:\R^n\to[0,\infty)$,
\begin{equation}\label{eq:PolarInterationGeneralK}
\int_{\R^n}h(x)\de x = n\vol{K}\int_0^{\infty}r^{n-1}\int_{\partial K}h(rz)\,\de\mu_K(z)\de r.
\end{equation}
By the usual measure-theoretic standard procedure to prove \Cref{eq:PolarInterationGeneralK} it is sufficient to consider functions $h$ of the form $h(x)={\bf 1}_A(x)$, where $A=(a,b)E$ with $0<a<b<\infty$ and $E$ a Borel subset of $\partial K$. However, in this case, the left-hand side is just $\vol{A}$, while for the right-hand side we obtain, by definition of the cone measure $\mu_K$,
\begin{align*}
n\vol{K}\int_0^\infty r^{n-1}\indic_{(a,b)}(r)\int_{\partial K}\indic_{E}(z)\,\de\mu_K(z)\de r = n\vol{K}\int_a^b r^{n-1}\de r\,\frac{\vol{[0,1]E}}{\vol{K}} = (b^n-a^n)\vol{[0,1]E},
\end{align*}
which is clearly also equal to $\vol{A}$.

\begin{proof}[Proof of \Cref{prop:SZGeneralK}]
Let $\phi:\R^n\to\R$ and $\psi:\R\to\R$ be non-negative measurable functions. Applying the polar integration formula, \Cref{eq:PolarInterationGeneralK}, yields
\begin{equation*}
\begin{split}
\Ex\Bigl[\phi\Bigl(\frac{Z}{\norm{Z}_K}\Bigr)\psi(\norm{Z}_K)\Bigr] &= \int_{\R^n}\phi\Bigl(\frac{x}{\norm{x}_K}\Bigr)\psi(\norm{x}_K)f(\norm{x}_K)\de x\\
&=n\vol{K}\int_0^\infty \psi(r)f(r)r^{n-1}\de r\,\int_{\partial K}\phi(z)\de\mu_K(z).
\end{split}
\end{equation*}
By the product structure of the last expression this first shows the independence of $Z/\norm{Z}_K$ and $\norm{Z}_K$. Moreover, choosing $\psi\equiv 1$ we see that
\begin{equation*}
\Ex \phi\Bigl(\frac{Z}{\norm{Z}_K}\Bigr) = n\vol{K}\int_0^\infty f(r)r^{n-1}\de r\,\int_{\partial K}\phi(z)\de\mu_K(z) = \int_{\partial K}\phi(z)\de\mu_K(z)
\end{equation*}
by definition of $f$. This proves that $Z/\norm{Z}_K\sim\mu_K$. That $U^{1/n}\frac{Z}{\norm{Z}_K}\sim\nu_K$ finally follows from the fact that  $U^{1/n}\sim\beta(n,1)$, which has density $r\mapsto nr^{n-1}$ for $r\in(0,1)$.
\end{proof}

The main reason why the theory treated in this survey is restricted to \(\ell_p^n\text{-balls} \), and not to more general convex bodies \(K\), is that \(\ell_p^n\text{-balls} \) are a class of convex bodies whose Minkowski functional is of the form
	\begin{equation}
	\label{eq:niceMinkowski}
		\norm{x}_K=F\Bigl(\sum_{i=1}^n f_i(x_i)\Bigr)
	\end{equation}
for certain functions \(f_1,\ldots, f_n\) and invertible positive function \(F\). This is necessary for \(Z\) to have independent coordinates. Indeed, in this case one can assign a joint density on \(Z\) that factorizes into its components, like for example (omitting the normalizing constant),
	\begin{equation*}
		e^{-F^{-1}(\norm{x}_K)}=e^{-\sum_{i=1}^n f_i(x_i)}=\prod_{i=1}^{n}e^{- f_i(x_i)},
	\end{equation*}
which ensures the independence of the coordinates \(Z_i\) of \(Z\).

Already for slightly more complicated convex bodies than \(\ell_p^n\text{-balls} \), \Cref{eq:niceMinkowski} no longer holds. For example, considering the convex body defined as
	\begin{equation*}
	 	\mathbb{B}^2_{1,2}\coloneqq\{x\in\R^2: \abs{x_1}+x_2^2\le 1\}.
	\end{equation*}
It can be computed that \(\norm{x}_{\mathbb{B}^2_{1,2}}=\abs{x_1}/2+\sqrt{x_1^2/4+x_2^2}\), which is not of the form \eqref{eq:niceMinkowski}.

On the other hand, the coordinate-wise representation of the density of \(Z\) in the precise form given by \Cref{eq:pGauss}, is also convenient to explicitly compute the distribution of some functionals of \(Z\), as we will see in the following section.

\subsection{A different probabilistic representation for \(p\text{-radial}\) probability measures}

Another probabilistic representation for a \(p\text{-symmetric}\) probability measure on \(\pball\) has been given by Barthe, Gu\'edon, Mendelson and Naor \cite{BGMN} in the following way,
	\begin{theorem}
	\label{thm:W}
	
		Let \(Z\) be a random vector in \(\R^n\) defined as in \(\Cref{thm:cone}\). Let \(W\) be a non-negative random variable with probability distribution \(\Pr_W\) and independent of \(Z\). Then
		\begin{equation*}
			\frac{Z}{(\pnorm{Z}^p+W)^{1/p}}\sim \Pr_W(\{0\})\,\mu_p^n+\mathrm{H}_W(\cdot)\,\nu_p^n,
		\end{equation*}
		where \(\mathrm{H}_W\colon\pball\to\R\), \(\mathrm{H}_W(x)=h(\pnorm{x})\), with
		\begin{equation*}
			h(r)=\oneover{\Gamma(1+n/p)(1-r^p)^{1+n/p}}\int_{(0,\infty)} s^{n/p}e^{s r^p/(r^p-1)}\de\Pr_W(s).
		\end{equation*}
	\end{theorem}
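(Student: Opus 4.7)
The plan is to condition on $W$ and combine the probabilistic representation of the cone measure from Theorem \ref{thm:cone} with the polar integration formula \eqref{eq:polarintegration}. Writing $R\coloneqq\pnorm{Z}$ and $\theta\coloneqq Z/\pnorm{Z}$, Theorem \ref{thm:cone} gives $\theta\sim\mu_p^n$ independent of $R$, and applying \eqref{eq:pradial} to the $p$-radial joint density of $Z$ shows that $R$ has density proportional to $r^{n-1}e^{-r^p/p}$ on $(0,\infty)$. Since $W$ is independent of $Z$, the factorization
\[
\frac{Z}{(\pnorm{Z}^p+W)^{1/p}}=\rho\,\theta,\qquad \rho\coloneqq\frac{R}{(R^p+W)^{1/p}}\in[0,1],
\]
displays $Y\coloneqq Z/(\pnorm{Z}^p+W)^{1/p}$ as an independent product of the uniform direction $\theta\sim\mu_p^n$ and a radial factor $\rho$ depending only on $(R,W)$.

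On the event $\{W=0\}$, which has probability $\Pr_W(\{0\})$, one has $\rho\equiv 1$ and hence $Y=\theta\sim\mu_p^n$, producing the first summand of the claimed mixture.

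For the absolutely continuous part I would fix $w>0$ and test against a non-negative measurable $\phi$ on $\pball$:
\[
\Ex\bigl[\phi(Y)\mid W=w\bigr]=\int_0^\infty g_R(r)\int_{\psphere}\phi\Bigl(\tfrac{r}{(r^p+w)^{1/p}}\theta\Bigr)\de\mu_p^n(\theta)\de r.
\]
Performing the bijective change of variable $r\mapsto\rho=r/(r^p+w)^{1/p}$ from $(0,\infty)$ onto $(0,1)$, whose inverse $r=w^{1/p}\rho(1-\rho^p)^{-1/p}$ has Jacobian $w^{1/p}(1-\rho^p)^{-1-1/p}$, rewrites the integrand as $\rho^{n-1}k(\rho,w)\int\phi(\rho\theta)\de\mu_p^n(\theta)$ for an explicit $p$-radial weight $k(\rho,w)$. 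Comparing with \eqref{eq:polarintegration}, the expression $n\vol{\pball}\rho^{n-1}\int\phi(\rho\theta)\de\mu_p^n(\theta)$ is precisely what appears when integrating $\phi$ against a $p$-radial density on $\pball$, so $k(\rho,w)/(n\vol{\pball})$ is the density of $Y\mid W=w$ with respect to Lebesgue on $\pball$, and multiplication by $\vol{\pball}$ gives the density with respect to $\nu_p^n$. Integrating over $w$ against the restriction of $\Pr_W$ to $(0,\infty)$ and using Fubini then yields the absolutely continuous component $H_W(x)=h(\pnorm{x})$.

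The main obstacle is purely computational bookkeeping: one has to combine the Jacobian, the explicit normalization of $g_R$, and the identity $\vol{\pball}=(2\Gamma(1+1/p))^n/\Gamma(1+n/p)$ in order to recover the precise prefactor $1/\bigl(\Gamma(1+n/p)(1-r^p)^{1+n/p}\bigr)$ in $h$, and one must verify that the exponential $e^{-r^p/p}$, after being pulled through the change of variable, gives exactly the kernel $e^{sr^p/(r^p-1)}$ (up to the normalization of $W$) under the integral. No additional probabilistic input is required beyond Theorem \ref{thm:cone} and the polar integration formula; the independence of $\theta$ and $\rho$ built in at the outset guarantees a priori that the absolutely continuous component is $p$-radial, as needed for it to be of the form $h(\pnorm{x})$.
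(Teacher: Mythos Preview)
The paper does not supply its own proof of this theorem; it is quoted as a result of Barthe, Gu\'edon, Mendelson and Naor \cite{BGMN} and is followed only by a remark discussing particular choices of \(W\). Your outline is correct and is precisely the natural route: factor \(Y=\rho\,\theta\) with \(\theta=Z/\pnorm{Z}\sim\mu_p^n\) independent of \(\rho=R/(R^p+W)^{1/p}\) (using \Cref{thm:cone}), isolate the atom at \(\{W=0\}\) to produce the \(\mu_p^n\)-part, and for fixed \(w>0\) perform the bijective change of variables \(r\mapsto\rho\) on \((0,\infty)\to(0,1)\) to obtain the conditional radial density, which the polar integration formula \eqref{eq:polarintegration} converts into a \(p\)-radial density with respect to \(\nu_p^n\). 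Integrating in \(w\) over \((0,\infty)\) via Fubini gives \(h\). This is essentially how the result is proved in \cite{BGMN}.

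One small warning about the bookkeeping you flag at the end: the density \(f_p\) in \Cref{thm:cone} carries the exponent \(e^{-\abs{x}^p/p}\), whereas the explicit constants in the displayed formula for \(h\) (and the statement \(\pnorm{Z}^p\sim\Gamma(n/p,1)\) in the subsequent remark) are consistent with the normalization \(e^{-\abs{x}^p}\) used in \cite{BGMN}. If you push your change of variables through with the paper's \(f_p\) verbatim you will find residual powers of \(p\) in front of \(w^{n/p}\) and inside the exponential; these are absorbed by either rescaling \(W\) or switching to the \cite{BGMN} convention. This is a minor normalization slip in the survey and not a defect in your argument.
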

	\begin{remark}
Note that all the distributions obtainable from \Cref{thm:W} are \(p\text{-radial}\), especially the \(p\text{-norm}\) of \(Z/(\pnorm{Z}^p+W)^{1/p}\) is
	\begin{equation*}
		R=\Bigl(\frac{\pnorm{Z}^p}{\pnorm{Z}^p+W}\Bigr)^{1/p}.
	\end{equation*}
	 Moreover, some particular choices of \(W\) in \Cref{thm:W} lead to interesting distributions:
		\begin{enumerate}[label=(\roman*)]
			\item When \(W\equiv 0\) we recover the cone measure of \Cref{thm:cone};
			\item For \(\alpha>0\), choosing \(W\sim\Gamma(\alpha,1)\) results in the density proportional to \(x\mapsto (1-\pnorm{x}^p)^{\alpha-1}\) for \(\pnorm{x}\le1 \).
			\item As a particular case of the previous one, when \(W\sim\exp(1)=\Gamma(1,1) \), then \(H_W\equiv 1\) and
			\begin{equation*}	
				\frac{Z}{(\pnorm{Z}^p+W)^{1/p}}\sim\nu_p^n.
			\end{equation*}
		This is not in contrast with \Cref{thm:cone}. Indeed, it is easy to compute that
			\begin{equation*}
				\pnorm{Z}^p\sim\Gamma(n/p,1).
			\end{equation*}
		 In view of the properties \eqref{eq:Gammaproperty1} and \eqref{eq:Gammaproperty2}, this implies
			\begin{equation*}
				\frac{\pnorm{Z}^p}{\pnorm{Z}^p+W}\sim\beta(n/p,1)\sim U^{p/n}.
			\end{equation*}  
			As a consequence of this fact, the orthogonal projection of the cone measure \(\mu_p^{n+p}\) on \(\partial \mathbb{B}_p^{n+p}\) onto the first n coordinates is \(\nu_p^n \). Indeed, if \(W=\sum_{i=n+1}^{n+p}\abs{Z_i}^p\), then \(W\sim\exp(1)\), while
			\begin{equation*}
				\frac{Z}{(\pnorm{Z}^p+W)^{1/p}}=\frac{(Z_1,\ldots,Z_n)}{(\sum_{i=1}^{n+p}\abs{Z_i}^p)^{1/p}}
			\end{equation*}
		is the required projection. We refer to \cite[Corollaries 3-4]{BGMN} for more details in this direction.
		\end{enumerate}
	\end{remark}
	
\section{Central limit theorems \& Laws of large numbers}\label{sec4:CLTandLLN}

The law of large numbers and the central limit theorem are arguably among the most prominent limit theorems in probability theory. Thanks to the probabilistic representation for the various geometric measures on $\ell_p^n$-balls described in Section \ref{subsec31:PMonLpBalls}, both of these limit theorems can successfully applied to deduce information about the geometry of $\ell_p^n$-balls. This -- by now classical -- approach will be described here, but we will also consider some more recent developments in this direction as well as several generalizations of known results.

\subsection{Classical results: Limit theorems \`a la Schechtman-Schmuckenschl\"ager}\label{subsec41:ClassiclSchechtSchmuck}


The following result on the absolute moments of a \(p\text{-generalized}\) Gaussian random variable is easy to derive by direct computation, and therefore we omit its proof, which the reader can find in \cite[Lemma 4.1]{KPT}
\begin{lemma}\label{lem:MOMENTSpGenGAUSSIAN}
	Let \(p\in(0,\infty]\) and let \(Z_0\) be a \(p\text{-generalized}\) Gaussian random variable (i.e., its density is given by \Cref{eq:pGauss}). Then, for any \(q\in[0,\infty]\),
		\begin{equation*}
			\Ex\bigl[\abs{Z_0}^q\bigr]=
				\begin{dcases}
					\frac{p^{q/p}}{q+1}\frac{\Gamma\bigl(1+\frac{q+1}{p} \bigr)}{\Gamma\bigl(1+\frac{1}{p} \bigr)}=: M_p(q)&\caseif p<\infty, \\
					\oneover{q+1}=: M_\infty(q)&\caseif p=\infty.
				\end{dcases}
		\end{equation*}
\end{lemma}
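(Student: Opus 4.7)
The plan is to compute the two cases by direct integration against the density $f_p$ from \eqref{eq:pGauss}.

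The case $p=\infty$ is essentially immediate: since $Z_0$ is uniform on $[-1,1]$ with density $\tfrac12\indic_{[-1,1]}$, symmetry yields
\[
\Ex[\abs{Z_0}^q] \;=\; \int_0^1 x^q\,\de x \;=\; \frac{1}{q+1}.
\]

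For the main case $p\in(0,\infty)$, I would again use the symmetry of $f_p$ to reduce the expectation to a one-sided integral,
\[
\Ex[\abs{Z_0}^q] \;=\; \frac{1}{p^{1/p}\,\Gamma(1+1/p)}\int_0^\infty x^q\, e^{-x^p/p}\,\de x.
\]
The key move is the substitution $u = x^p/p$, under which $x = (pu)^{1/p}$ and $\de x = p^{1/p-1}u^{1/p-1}\de u$. This converts the integral into a standard Gamma integral and produces
\[
\int_0^\infty x^q\, e^{-x^p/p}\,\de x \;=\; p^{(q+1)/p\,-\,1}\,\Gamma\!\Bigl(\frac{q+1}{p}\Bigr).
\]
Plugging back and collecting powers of $p$ gives the intermediate form $\Ex[\abs{Z_0}^q] = p^{q/p - 1}\,\Gamma((q+1)/p)/\Gamma(1+1/p)$. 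The final step is to invoke the functional equation $\Gamma(1+a) = a\,\Gamma(a)$ with $a = (q+1)/p$, which rewrites $\Gamma((q+1)/p)$ as $\tfrac{p}{q+1}\Gamma(1+(q+1)/p)$; this absorbs exactly the remaining $p^{-1}$ into $\tfrac{p}{q+1}$ and produces the claimed prefactor $\tfrac{p^{q/p}}{q+1}$ together with the shifted Gamma values.

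There is no genuine obstacle here; the whole argument is a change of variables plus bookkeeping. The only place to be careful is the accounting of the powers of $p$: the $p^{-1/p}$ from the normalization of $f_p$, the $p^{(q+1)/p - 1}$ arising from the substitution, and the additional factor $p$ produced by the functional equation of $\Gamma$ must combine correctly to yield $p^{q/p}$, otherwise one easily ends up with a misplaced power of $p$ in the prefactor.
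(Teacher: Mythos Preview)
Your proof is correct and matches the paper's approach: the paper itself omits the proof, stating only that the lemma ``is easy to derive by direct computation'' and referring to \cite{KPT}. Your change of variables $u=x^p/p$ followed by the functional equation $\Gamma(1+a)=a\,\Gamma(a)$ is exactly the intended direct computation, and your bookkeeping of the powers of $p$ is accurate.
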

For convenience, we will also indicate \(m_{p,q}\coloneqq M_p(q)^{1/q}\) and 
	\begin{equation*}
C_p(q,r)\coloneqq\Cov(\abs{Z_0}^q,\abs{Z_0}^r)=M_p(q+r)-M_p(q)M_p(r).
	\end{equation*}
We use the convention that \(M_\infty(\infty)=C_\infty(\infty,\infty)=C_\infty(\infty,q)=0\). The next theorem is a version of the central limit theorem in \cite[Proposition 2.4]{S}.

\begin{theorem}
	\label{thm:clt-qnorm-1dim}
	Let \(0<p,q<\infty\), \(p\neq q\) and \(X\sim\nu_p^n\). Then
		\begin{equation*}
			\sqrt n\Bigl(n^{1/p-1/q}\frac{\norm{X}_q}{m_{p,q}}-1 \Bigr)\dconv N,
		\end{equation*}
	where \(N\sim\mathcal{N}\bigl(0,\sigma^2_{p,q}\bigr)\) and 
		\begin{equation*}
		\label{eq:sigma}
		\begin{split}
			\sigma^2_{p,q}&\coloneqq\frac{C_p(q,q)}{q^2 M_p(q)^2}-\frac{2C_p(p,q)}{pqM_p(q)}+\frac{C_p(p,p)}{p^2}
		\end{split}
		\end{equation*}
\end{theorem}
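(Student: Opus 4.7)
The natural approach is to combine the Schechtman--Zinn probabilistic representation (Theorem \ref{thm:cone}) with the multivariate central limit theorem and the delta method.

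First, using Theorem \ref{thm:cone} we replace $X$ by its distributional representation: we can realize $X = U^{1/n} Z/\norm{Z}_p$, where $Z = (Z_1, \ldots, Z_n)$ has i.i.d.\ coordinates with density $f_p$ given by \eqref{eq:pGauss}, and $U \sim \mathrm{Unif}([0,1])$ is independent of $Z$. Then
\begin{equation*}
n^{1/p-1/q}\norm{X}_q = U^{1/n}\,\frac{\bigl(\tfrac1n\sum_{i=1}^n \abs{Z_i}^q\bigr)^{1/q}}{\bigl(\tfrac1n\sum_{i=1}^n \abs{Z_i}^p\bigr)^{1/p}}.
\end{equation*}
Writing $A_n \coloneqq \bigl(\tfrac1n\sum \abs{Z_i}^q,\,\tfrac1n\sum \abs{Z_i}^p\bigr)$, Lemma \ref{lem:MOMENTSpGenGAUSSIAN} identifies the mean vector as $(M_p(q), M_p(p)) = (M_p(q),1)$, since a direct computation gives $M_p(p)=1$.

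Next, I would apply the multivariate central limit theorem to $A_n$. The coordinates $(\abs{Z_i}^q, \abs{Z_i}^p)$ are i.i.d.\ with covariance matrix
\begin{equation*}
\Sigma = \begin{pmatrix} C_p(q,q) & C_p(p,q) \\ C_p(p,q) & C_p(p,p) \end{pmatrix},
\end{equation*}
so $\sqrt n \bigl(A_n - (M_p(q),1)\bigr) \dconv \mathcal{N}(0,\Sigma)$. I would then apply the delta method to the smooth map $g(x,y) \coloneqq x^{1/q} y^{-1/p}$, whose gradient at $(M_p(q),1)$ has components $\tfrac{1}{q}M_p(q)^{1/q-1}=\tfrac{m_{p,q}}{qM_p(q)}$ and $-\tfrac{1}{p}M_p(q)^{1/q}=-\tfrac{m_{p,q}}{p}$. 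The resulting limiting variance is
\begin{equation*}
m_{p,q}^{2}\!\left[\frac{C_p(q,q)}{q^2 M_p(q)^2}-\frac{2 C_p(p,q)}{p q M_p(q)}+\frac{C_p(p,p)}{p^2}\right] = m_{p,q}^2\,\sigma_{p,q}^2,
\end{equation*}
so that $\sqrt n \bigl(n^{1/p-1/q}\norm{Z}_q/\norm{Z}_p - m_{p,q}\bigr) \dconv \mathcal{N}(0,m_{p,q}^2\sigma_{p,q}^2)$, which after dividing by $m_{p,q}$ is exactly the claimed statement, but for $Z/\norm{Z}_p$ in place of $X$.

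Finally, to pass from $Z/\norm{Z}_p$ to $X = U^{1/n} Z/\norm{Z}_p$, I would use Slutsky's theorem. Since $\sqrt n\,(U^{1/n}-1) = \sqrt n\,(e^{(\log U)/n}-1)$ tends almost surely to $0$, multiplying by the factor $U^{1/n}$ does not alter the Gaussian limit. Combining this with the previous display yields the theorem. I do not anticipate a genuine obstacle: the only care required is in matching the coefficients produced by the delta method to the explicit expression for $\sigma_{p,q}^2$, and in verifying the applicability of Cramér's multivariate CLT, which is immediate because all moments of $\abs{Z_1}^p$ and $\abs{Z_1}^q$ exist thanks to the exponential tails of $f_p$ for $p<\infty$.
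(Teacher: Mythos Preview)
Your proposal is correct and follows essentially the same route as the paper: the paper proves the multivariate generalization (Theorem~\ref{thm:multCLT}) by the Schechtman--Zinn representation, the multivariate CLT for $(\sum|Z_j|^{q},\sum|Z_j|^{p})$, a Taylor expansion of $F(x,y)=(1+x/M_p(q))^{1/q}(1+y)^{-1/p}$ (i.e., the delta method), and then disposes of the radial factor via $\sqrt n(1-\|X\|_p)\pconv 0$, which for $X\sim\nu_p^n$ is exactly your Slutsky step with $U^{1/n}$. The only cosmetic difference is that you invoke the delta method by name whereas the paper carries out the Taylor expansion explicitly; note also that only second moments (not ``Cram\'er's'' CLT or exponential tails) are needed for the multivariate CLT step.
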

	Note that, since \(M_p(p)=1\), then \( \sigma^2_{p,p}=0\). In fact, in such a case
		\begin{equation*}
			\sqrt{n}(\pnorm{X}-1)\dconv 0,
		\end{equation*}
	and a different normalization than \(\sqrt{n} \) is needed to obtain a non-degenerate limit distribution. Moreover, \(\sigma_{p,q}^2>0 \) whenever \(p\neq q\).
	
	For our purposes, it is convenient to define the following quantities
		\begin{equation*}
			k_{p,n}\coloneqq n^{1/p}\vol{\pball}^{1/n}, \qquad 	k_{q,n}\coloneqq n^{1/q}\vol{\mathbb{B}_q^n}^{1/n}
		\end{equation*}
	and
		\begin{equation*}
			A_{p,q,n}\coloneqq\frac{k_{p,n}}{m_{p,q}k_{q,n}}.
		\end{equation*}
	It is easy to verify with Sterling's approximation that, for any \(p,q>0\), \(A_{p,q,n}=A_{p,q}+\mathcal{O}(1/n)\) for \( A_{p,q}\in(0,\infty)\), as \(n\to\infty\).
	
	With this definition in mind, we exploit \Cref{thm:clt-qnorm-1dim} to prove a result on the volume of the intersection of \(\ell_p^n\text{-balls}\). This can be regarded as a generalization of the main results in Schechtman and Schmuckenschl\"ager \cite{SS1991}, and Schmuckenschl\"ager \cite{S1998,S}.
	
	\begin{corollary}
		\label{cor:intersectballs}
	Let \(0<p,q<\infty\) and \(p\neq q\). Let \(r\in[0,1]\) and \((t_n)_{n\in\N}\subseteq\R^+\) be such that
	\begin{equation*}
	\lim_{n\to\infty}\sqrt{n}(t_nA_{p,q}-1)=\Phi_{p,q}^{-1}(r),
	\end{equation*}
	where \(\Phi_{p,q}:[-\infty,+\infty]\to[0,1] \) is the distribution function of \(N\sim\mathcal{N}(0,\sigma^2_{p,q})\) and \(\sigma^2_{p,q}\) is defined in \Cref{eq:sigma},
	i.e.,
	\begin{equation*}
	\Phi_{p,q}(x)\coloneqq  \oneover{\sqrt{2\pi\sigma_{p,q}^2}}\int_{-\infty}^xe^{-s^2/(2\sigma_{p,q}^2)}\de s.
	\end{equation*}
	Then
	\begin{equation*}
	\lim_{n\to\infty}\vol[\big]{\mathbb{D}_p^n\cap t_n\mathbb{D}_q^n}=r.
	\end{equation*}
	In particular, when \(t_n\equiv t\), then
	\begin{equation*}
	\lim_{n\to\infty}\vol[\big]{\mathbb{D}_p^n\cap t\,\mathbb{D}_q^n}=
		\begin{cases}
			0&\caseif t<1/A_{p,q},\\
			1/2&\caseif t=1/A_{p,q},\\
			1&\caseif t> 1/A_{p,q}.
		\end{cases}
	\end{equation*}
\end{corollary}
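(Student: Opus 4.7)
The plan is to rewrite the volume of the intersection as a distribution function of the $q$-norm of a uniform random vector on $\mathbb{D}_p^n$, and then apply \Cref{thm:clt-qnorm-1dim} after carefully tracking the normalizing constants.

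First I would observe that $|\mathbb{D}_p^n|=1$, so if $Y$ is uniformly distributed on $\mathbb{D}_p^n$ then $|\mathbb{D}_p^n\cap t_n\mathbb{D}_q^n| = \Pr(Y\in t_n\mathbb{D}_q^n)$. Writing $Y = X/\vol{\pball}^{1/n}$ with $X\sim\nu_p^n$, the condition $Y\in t_n\mathbb{D}_q^n$ becomes $\norm{X}_q \le t_n\,\vol{\pball}^{1/n}/\vol{\mathbb{B}_q^n}^{1/n}$. Substituting $\vol{\pball}^{1/n} = k_{p,n}/n^{1/p}$ and similarly for $q$, this condition is equivalent to
\begin{equation*}
n^{1/p-1/q}\,\frac{\norm{X}_q}{m_{p,q}} \le t_n\,\frac{k_{p,n}}{m_{p,q}k_{q,n}} = t_n A_{p,q,n}.
\end{equation*}

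Subtracting $1$ from both sides and multiplying by $\sqrt{n}$ turns the left-hand side into exactly the quantity whose limit law is given by \Cref{thm:clt-qnorm-1dim}, namely a centred Gaussian with variance $\sigma_{p,q}^2$. Hence
\begin{equation*}
\vol[\big]{\mathbb{D}_p^n\cap t_n\mathbb{D}_q^n} = \Pr\Bigl(\sqrt{n}\Bigl(n^{1/p-1/q}\frac{\norm{X}_q}{m_{p,q}}-1\Bigr) \le \sqrt{n}\bigl(t_n A_{p,q,n}-1\bigr)\Bigr).
\end{equation*}
Now I would handle the deterministic right-hand side. Using $A_{p,q,n} = A_{p,q}+\mathcal{O}(1/n)$ and the hypothesis $\sqrt{n}(t_n A_{p,q}-1)\to \Phi_{p,q}^{-1}(r)$, one obtains
\begin{equation*}
\sqrt{n}(t_n A_{p,q,n}-1) = \sqrt{n}(t_n A_{p,q}-1) + \sqrt{n}\,t_n(A_{p,q,n}-A_{p,q}) \;\xrightarrow[n\to\infty]{}\; \Phi_{p,q}^{-1}(r),
\end{equation*}
since the second term is $\mathcal{O}(t_n/\sqrt{n})$ and $(t_n)$ is bounded (as $t_n A_{p,q}-1=\mathcal{O}(1/\sqrt{n})$). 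A standard Portmanteau/Slutsky-type argument (applied to the continuous distribution function $\Phi_{p,q}$) then gives that the above probability converges to $\Phi_{p,q}(\Phi_{p,q}^{-1}(r))=r$, proving the first claim.

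For the second claim with $t_n\equiv t$, the quantity $\sqrt{n}(tA_{p,q}-1)$ tends to $-\infty$, $0$, or $+\infty$ according as $t<1/A_{p,q}$, $t=1/A_{p,q}$, or $t>1/A_{p,q}$, and the three cases correspond to $\Phi_{p,q}^{-1}(r)$ with $r\in\{0,1/2,1\}$. The only slightly delicate point in the whole argument is verifying the convergence of $\sqrt{n}(t_n A_{p,q,n}-1)$ given only the convergence of $\sqrt{n}(t_n A_{p,q}-1)$; this reduces to ensuring that $t_n$ stays bounded and that the $\mathcal{O}(1/n)$-correction in the Stirling expansion of $A_{p,q,n}$ does not spoil the $1/\sqrt{n}$-fluctuations, both of which are immediate.
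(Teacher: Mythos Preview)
Your proof is correct and follows essentially the same route as the paper: rewrite the intersection volume as $\Pr\bigl(\sqrt{n}(n^{1/p-1/q}\norm{X}_q/m_{p,q}-1)\le \sqrt{n}(t_nA_{p,q,n}-1)\bigr)$, invoke \Cref{thm:clt-qnorm-1dim} for the left-hand side, and use $A_{p,q,n}=A_{p,q}+\mathcal{O}(1/n)$ to match the deterministic threshold with the hypothesis. One small caveat: your boundedness argument for $(t_n)$ relies on $t_nA_{p,q}-1=\mathcal{O}(1/\sqrt{n})$, which only holds when $\Phi_{p,q}^{-1}(r)$ is finite, i.e.\ $r\in(0,1)$; for $r\in\{0,1\}$ the threshold diverges to $\mp\infty$, but then $\sqrt{n}(t_nA_{p,q,n}-1)$ is easily seen to diverge the same way (for $r=0$ one has $t_n$ bounded since $t_nA_{p,q}<1$ eventually, and for $r=1$ the positivity $A_{p,q,n}\to A_{p,q}>0$ suffices), so the conclusion is unaffected.
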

\begin{proof}
	First of all, note that, since \(A_{p,q,n}=A_{p,q}+\mathcal{O}(1/n)\), then
	\begin{equation*}
	\lim_{n\to\infty}\sqrt{n}(t_nA_{p,q,n}-1)=\lim_{n\to\infty}\sqrt{n}(t_nA_{p,q}-1),
	\end{equation*}
	provided that the latter exists in \([-\infty,\infty]\), as per assumption.	In particular, taking the limit on both sides of the following equality,
	\begin{equation*}
	\Pr\bigl(\norm{X}_q\le t_n k_{p,n}k_{q,n}^{-1}n^{1/p-1/q} \bigr)=\Pr\bigl(\sqrt{n}(n^{1/p-1/q}m_{p,q}^{-1}\norm{X}_q-1)\le \sqrt{n}(t_n A_{p,q,n}-1) \bigr),
	\end{equation*}
	we get, because of \Cref{thm:clt-qnorm-1dim},
	\begin{equation*}
	\lim_{n\to\infty}\Pr\bigl(\norm{X}_q\le t_n k_{p,n}k_{q,n}^{-1}n^{1/p-1/q} \bigr)
	=\Pr\bigl(N\le \Phi_{p,q}^{-1}(r)\bigr)
	=r.
	\end{equation*}
	On the other hand, it is true that the following chain of equalities hold:
	\begin{equation*}
	\begin{split}
	\Pr\bigl(\norm{X}_q\le t_n k_{p,n}k_{q,n}^{-1}n^{1/p-1/q} \bigr)
	&=\frac{\vol{z\in\pball:z\in t_n k_{p,n}k_{q,n}^{-1}n^{1/p-1/q}\mathbb{B}_q^n}}{\vol{\pball}}\\
	&=\vol[\big]{z\in\vol{\pball}^{-1/n}\pball:z\in t_n k_{p,n}k_{q,n}^{-1}n^{1/p-1/q}\vol{\pball}^{-1/n}\mathbb{B}_q^n}\\
	&=\vol{z\in \mathbb{D}_p^n:z\in t_n\mathbb{D}_q^n}\\
	&=\vol{ \mathbb{D}_p^n\cap t_n \mathbb{D}_q^n},
	\end{split}
	\end{equation*}
	which concludes the main part of proof. For the last observation, note that for any \(t\) constant, either \(\sqrt{n}(t A_{p,q}-1)\equiv 0\) or it diverges.
\end{proof}

\subsection{Recent developments}\label{subsec42:RecentDevelopmentsMultivariateCLT+OutlookMatrix}

\subsubsection{The multivariate CLT}

We present here a multivariate central limit theorem that recently appeared in \cite{KPT}. It constitutes the multivariate generalization of \Cref{thm:clt-qnorm-1dim}. Similar to the classical results of Schechtman and Schmuckenschl\"ager \cite{SS1991}, and Schmuckenschl\"ager \cite{S1998,S} this was used to study intersections of (this time multiple) $\ell_p^n$-balls. In part \(1.\), we replace the original assumption \(X\sim\nu_p^n\) of \cite{KPT} to a more general one, that appears naturally from the proof. Part \(2.\) is substantially different and cannot be generalized with the same assumption.

\begin{theorem}
	\label{thm:multCLT}
	Let \(n,k\in\N\) and \(p\in[1,\infty]\). 
	\begin{enumerate}
		\item Let \(X\) be a continuous \(p\text{-radial}\) random vector in \(\R^n\) such that
		\begin{equation}
		\label{eq:Rconv}
		\sqrt{n}\big(1-\pnorm{X}\big)\pconv 0.
		\end{equation}
		 Fix a \(k\text{-tuple}\) \((q_1,\ldots,q_k)\in([1,\infty)\setminus\{p\})^k\). 
		We have the multivariate central limit theorem
			\begin{equation*}
				\sqrt{n}\Bigl(n^{1/p-1/q_1}\frac{\norm{X}_{q_1}}{m_{p,q_1}}-1,\ldots, n^{1/p-1/q_k}\frac{\norm{X}_{q_k}}{m_{p,q_k}}-1\Bigr)\dconv N,
			\end{equation*}
			where \(N=(N_1,\ldots,N_k)\sim\mathcal{N}(0,\Sigma)\), with covariance matrix \(\Sigma=(c_{i,j})_{i,j=1}^k \) whose entries are given by
			\begin{equation}
			\label{eq:multisigma}
				c_{i,j}\coloneqq
					\begin{dcases}
				\oneover{q_iq_j}\biggl( \frac{\Gamma(\frac{1}{p})\Gamma(\frac{q_i+q_j+1}{p})}{\Gamma(\frac{q_i+1}{p})\Gamma(\frac{q_j+1}{p})}-1\biggr)-\oneover{p}&\caseif p<\infty,\\
				\oneover{q_i+q_j+1}&\caseif p=\infty.
				\end{dcases}
			\end{equation}
		\item Let \(X\sim\nu_p^n\). If \(p<\infty\), then we have the non-central limit theorem
			\begin{equation*}
				\frac{n^{1/p}}{(p\log n)^{1/p-1}}\norm{X}_\infty-A_n^{(p)}\dconv G,
			\end{equation*}
		where 
			\begin{equation*}
				A_n^{(p)}\coloneqq p\log n -\frac{1-p}{p}\log (p\log n)+\log(p^{1/p}\Gamma(1+1/p))
			\end{equation*}
		and \(G\) is a Gumbel random variable with distribution function \(\R\ni t\mapsto e^{-e^{-t}}\).
	\end{enumerate}
\end{theorem}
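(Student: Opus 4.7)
The plan is to reduce both parts to the probabilistic representation for \(p\)-radial vectors provided by \Cref{prop:SZGeneralK}. Applied to the \(p\)-radial law of \(X\), it lets me write \(X\disteq R\cdot Z/\pnorm{Z}\), where \(Z=(Z_1,\ldots,Z_n)\) has i.i.d.\ \(p\)-generalized Gaussian entries, \(R\disteq\pnorm{X}\), and \(R\) is independent of \(Z/\pnorm{Z}\); for \(X\sim\nu_p^n\) in part 2, \Cref{thm:cone} furthermore identifies \(R\disteq U^{1/n}\) with \(U\sim\mathrm{Unif}([0,1])\). Consequently
\begin{equation*}
\norm{X}_{q_i}\disteq R\,\frac{\norm{Z}_{q_i}}{\pnorm{Z}},\qquad \norm{X}_\infty\disteq R\,\frac{\max_{j\le n}\abs{Z_j}}{\pnorm{Z}},
\end{equation*}
and the problem reduces in both cases to a limit theorem for i.i.d.\ functionals of \(Z\), with the factor \(R\) to be neutralized at the end by a Slutsky argument.

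For part 1, I apply the multivariate central limit theorem to the i.i.d.\ \((k+1)\)-dimensional vectors \(\xi_j\coloneqq(\abs{Z_j}^{q_1},\ldots,\abs{Z_j}^{q_k},\abs{Z_j}^p)\), whose mean is \((M_p(q_1),\ldots,M_p(q_k),1)\) by \Cref{lem:MOMENTSpGenGAUSSIAN} (using \(M_p(p)=1\)) and whose covariance matrix \(\Xi\) has entries \(C_p(q_i,q_j)\), \(C_p(q_i,p)\), \(C_p(p,p)\). The multivariate delta method applied to the smooth map \(\Psi\colon(a_1,\ldots,a_k,b)\mapsto(a_i^{1/q_i}/(b^{1/p}m_{p,q_i}))_{i=1}^k\) at this mean (where \(\Psi\) evaluates to \((1,\ldots,1)\)) then produces the desired CLT at scale \(\sqrt n\) for \((n^{1/p-1/q_i}\norm{Z}_{q_i}/(m_{p,q_i}\pnorm{Z}))_i\). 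The limiting covariance \(J\Xi J^T\) is computed from \(\partial_{a_i}\Psi_i=1/(q_iM_p(q_i))\) and \(\partial_b\Psi_i=-1/p\) at the mean; the cross-terms telescope thanks to the identity \(M_p(q+p)/M_p(q)=q+1\) (a consequence of \(\Gamma(1+x)=x\Gamma(x)\)), producing the closed form \eqref{eq:multisigma} after a single \(\Gamma\)-ratio simplification. Finally, writing
\begin{equation*}
\sqrt n\Bigl(n^{1/p-1/q_i}\tfrac{\norm{X}_{q_i}}{m_{p,q_i}}-1\Bigr)=R\cdot\sqrt n\Bigl(n^{1/p-1/q_i}\tfrac{\norm{Z}_{q_i}}{m_{p,q_i}\pnorm{Z}}-1\Bigr)+\sqrt n\,(R-1),
\end{equation*}
the assumption \eqref{eq:Rconv} together with \(R\pconv 1\) and a vector Slutsky lemma transfer the CLT from \(Z\) to \(X\).

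For part 2, the representation gives \(\norm{X}_\infty=U^{1/n}M_n/\pnorm{Z}\) with \(M_n\coloneqq\max_{j\le n}\abs{Z_j}\). A one-dimensional CLT for \(\tfrac1n\sum\abs{Z_j}^p\) (variance \(C_p(p,p)\)) yields \(n^{1/p}/\pnorm{Z}=1+\mathcal{O}(1/\sqrt n)\) in probability, while \(U^{1/n}=1+\mathcal{O}(1/n)\) almost surely; since \((p\log n)^{(p-1)/p}M_n\) is of order \(\log n\) and \(\mathcal{O}(1/\sqrt n)\cdot\log n=o(1)\), these multiplicative perturbations can be absorbed by Slutsky. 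The core statement is then the pure extreme-value assertion \((p\log n)^{(p-1)/p}M_n-A_n^{(p)}\dconv G\), which I would prove classically: one integration by parts gives the sharp tail \(\Pr(\abs{Z_1}>t)=t^{1-p}e^{-t^p/p}(1+o(1))/(p^{1/p}\Gamma(1+1/p))\) as \(t\to\infty\); the quantile \(y_n(t)\) defined by \(n\Pr(\abs{Z_1}>y_n(t))\to e^{-t}\) is identified by one Newton step around the leading-order root \(y_n^{(0)}\sim(p\log n)^{1/p}\); and the conclusion follows from \(\Pr(M_n\le y_n(t))=(1-\Pr(\abs{Z_1}>y_n(t)))^n\to e^{-e^{-t}}\) combined with the expansion \((p\log n)^{(p-1)/p}y_n(t)=A_n^{(p)}+t+o(1)\).

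The main obstacle in part 1 is the \(\Gamma\)-function bookkeeping that compresses \(J\Xi J^T\) to the clean form \eqref{eq:multisigma}: many intermediate terms collapse to the constants thanks to \(M_p(p)=1\) and \(M_p(q+p)/M_p(q)=q+1\), but verifying the final simplification requires care. The main obstacle in part 2 is extracting the correct subleading term of the quantile \(y_n(t)\) with \(\log\log n\)-precision, since the centering \(A_n^{(p)}\) is itself of order \(\log n\); once this subleading asymptotic is secured, the interplay with the multiplicative corrections \(U^{1/n}\) and \(n^{1/p}/\pnorm{Z}\) becomes routine because they approach \(1\) fast enough that their contribution, multiplied against the \(\log n\)-sized centering, is still \(o(1)\).
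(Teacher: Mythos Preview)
Your approach to part 1 is correct and essentially the same as the paper's: both use the representation \(X\disteq \pnorm{X}\,Z/\pnorm{Z}\), apply the multivariate CLT to the \((k+1)\)-tuple \((\abs{Z_j}^{q_1},\ldots,\abs{Z_j}^{q_k},\abs{Z_j}^p)\), linearize (you say ``delta method'', the paper writes out the Taylor expansion of \(F_i(x,y)=(1+x/M_p(q_i))^{1/q_i}(1+y)^{-1/p}\) by hand), and then remove the radial factor via the same additive decomposition and Slutsky argument using \eqref{eq:Rconv}. The paper does not actually prove part 2, referring instead to \cite{KPT}; your extreme-value sketch (tail asymptotics of \(\abs{Z_1}\), quantile expansion to \(\log\log n\) precision, then absorbing the \(U^{1/n}\) and \(n^{1/p}/\pnorm{Z}\) factors by Slutsky) is the standard route and is sound in outline.
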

\begin{remark}
Note that the assumptions of \Cref{thm:multCLT} include the cases \(X\sim\nu_p^n\)  and \( X\sim\mu_p^n\). In fact, condition \eqref{eq:Rconv} is just the quantitative version of the following concept: to have Gaussian fluctuations it is necessary that the bigger \(n\) gets, the more the distribution of \(X\) is concentrated in near \(\partial\pball\). It is relevant to note that \eqref{eq:Rconv}  also keeps open the possibility for a non-trivial limit distribution when rescaling \((1-\pnorm{X})\) with a sequence that grows faster than \(\sqrt{n}\). This would yield a limit-theorem for \(\pnorm{X}\). For example, when \(X\sim\nu_p^n\), we already noted that \(\pnorm{X}\disteq U^{1/n}\), so that
\begin{equation*}
	n(1-\pnorm{X})\dconv E\sim\exp(1).
\end{equation*}
On the other hand, when \(X\sim\mu_p^n\), then \(1-\pnorm{X}\equiv 0 \) .
\end{remark}
\begin{proof}
	We only give a proof for the first part of the theorem, the second one can be found in \cite{KPT}.
	
	Let first \(p\in[1,\infty)\). Consider a sequence of independent \(p\text{-generalized}\) Gaussian random variables \((Z_j)_{j\in\N}\), also independent from every \(X\). Set \( Z=(Z_1,\ldots,Z_n)\). For any \(n\in \N\) and \(i\in\{1,\ldots, k\}\), consider the random variables
		\begin{equation*}
			\xi_n^{(i)}\coloneqq\oneover{\sqrt{n}}\sum_{j=1}^n\bigl(\abs{Z_j}^{q_i}-M_p(q_i)\bigr)\qquad\text{and}\qquad\eta_n\coloneqq\oneover{\sqrt{n}}\sum_{j=1}^n\bigl(\abs{Z_j}^{p}-1\bigr).
		\end{equation*}
		According to the classical multivariate central limit theorem, we get
		\begin{equation*}
			(\xi_n^{(1)},\ldots,\xi_n^{(k)},\eta_n)\dconv 	(\xi^{(1)},\ldots,\xi^{(k)},\eta)\sim\mathcal{N}(0,\widetilde\Sigma)
		\end{equation*}
		with covariance matrix given by
		\begin{equation*}
			\widetilde\Sigma=\begin{pmatrix}
			C_p(q_1,q_1) & \cdots & C_p(q_1,q_k) & C_p(q_1,p) \\
			\vdots & \!\ddots & \vdots & \vdots \\
				C_p(q_k,q_1) & \cdots & C_p(q_k,q_k) & C_p(q_k,p) \\
				\vspace{-10pt} \\
				C_p(p,q_1) & \cdots & C_p(p,q_k) & C_p(p,p)
			\end{pmatrix}
		\end{equation*}
	Using \Cref{thm:cone} and the aforementioned definitions we can write, for \(i\in\{1,
	\ldots,k\} \),
		\begin{equation*}
		\begin{split}
			\norm{X}_{q_i}&\disteq \frac{\pnorm{X}\norm{Z}_{q_i}}{\pnorm{Z}}\\
				&=\pnorm{X}\frac{(nM_p(q_i)+\sqrt{n}\xi_n^{(i)})^{1/q_i}}{(n+\sqrt{n}\eta_n)^{1/p}}\\
				&=\pnorm{X}\frac{(nM_p(q_i))^{1/q_i}}{n^{1/p}}F_i\Bigl( \frac{\xi_n^{(i)}}{\sqrt{n}},\frac{\eta_n}{\sqrt{n}}\Bigr)\\
				&=\pnorm{X}\,n^{1/q_i-1/p}m_{p,q}F_i\Bigl( \frac{\xi_n^{(i)}}{\sqrt{n}},\frac{\eta_n}{\sqrt{n}}\Bigr)\\
				&=(\pnorm{X}-1)n^{1/q_i-1/p}m_{p,q}F_i\Bigl( \frac{\xi_n^{(i)}}{\sqrt{n}},\frac{\eta_n}{\sqrt{n}}\Bigr)+n^{1/q_i-1/p}m_{p,q}F_i\Bigl( \frac{\xi_n^{(i)}}{\sqrt{n}},\frac{\eta_n}{\sqrt{n}}\Bigr)
		\end{split}
		\end{equation*}
	where we defined the function \(F_i\colon\R\times (\R\setminus \{-1\} )\to\R \) as
	\begin{equation*}
		F_i(x,y)\coloneqq\frac{(1+x/M_p(q_i))^{1/q_i}}{(1+y)^{1/p}}.
	\end{equation*}
	Note that \(F_i\) is continuously differentiable around \((0,0)\) with Taylor expansion given by
		\begin{equation*}
			F_i(x,y)=1+\frac{x}{q_iM_p(q_i)}-\frac{y}{p}+\mathcal{O}(x^2+y^2).
		\end{equation*}
	Since, for the law of large numbers, \(\xi_n^{(i)}/\sqrt{n}\asconv 0 \) and \(\eta_n/\sqrt{n}\asconv 0 \), the previous equation means that there exists a random variable \(C\), independent of \(n\), such that 
	\begin{equation*}
		\abs[\Big]{F_i\Bigl( \frac{\xi_n^{(i)}}{\sqrt{n}},\frac{\eta_n}{\sqrt{n}}\Bigr)-\Bigl(1+\frac{1}{q_iM_p(q_i)}\frac{\xi_n^{(i)}}{\sqrt{n}}-\frac{1}{p}\frac{\eta_n}{\sqrt{n}}\Bigr)}\le C\frac{(\xi_n^{(i)})^2+\eta_n^2}{n}.
	\end{equation*}
In particular,
	\begin{equation*}
	\begin{split}
	 \sqrt{n}(&\pnorm{X}-1)\Bigl(1+\frac{1}{q_iM_p(q_i)}\frac{\xi_n^{(i)}}{\sqrt{n}}-\frac{1}{p}\frac{\eta_n}{\sqrt{n}}-C\frac{(\xi_n^{(i)})^2+\eta_n^2}{n}\Bigr)\\
	 &+\Bigl(\frac{1}{q_iM_p(q_i)}\xi_n^{(i)}-\frac{1}{p}\eta_n-C\frac{(\xi_n^{(i)})^2+\eta_n^2}{\sqrt{n}}\Bigr)\\
			&\qquad\qquad\le\sqrt{n}\Bigl(n^{1/p-1/q_i}\frac{\norm{X}_{q_i}}{m_{p,q_i}}-1\Bigr)\\
			&\qquad\qquad\qquad\qquad
			\le\sqrt{n}(\pnorm{X}-1)\Bigl(1+\frac{1}{q_iM_p(q_i)}\frac{\xi_n^{(i)}}{\sqrt{n}}-\frac{1}{p}\frac{\eta_n}{\sqrt{n}}+C\frac{(\xi_n^{(i)})^2+\eta_n^2}{n}\Bigr)\\
			&\quad\qquad\qquad\qquad\qquad\qquad+\Bigl(\frac{1}{q_iM_p(q_i)}\xi_n^{(i)}-\frac{1}{p}\eta_n+C\frac{(\xi_n^{(i)})^2+\eta_n^2}{\sqrt{n}}\Bigr)
	\end{split}
	\end{equation*}
	Note that the first summand of both bounding expressions tends to \(0\) in distribution by assumption \eqref{eq:Rconv}, while the second converges in distribution to \(\frac{1}{q_iM_p(q_i)}\xi^{(i)}-\frac{1}{p}\eta\). This implies that 
	\begin{equation*}
		\sqrt{n}\Bigl(n^{1/p-1/q_i}\frac{\norm{X}_{q_i}}{m_{p,q_i}}-1\Bigr)\dconv \frac{1}{q_iM_p(q_i)}\xi^{(i)}-\frac{1}{p}\eta\eqqcolon N_i,
	\end{equation*}
	where \(N_i\) is a centered Gaussian random variable.
	To obtain the final multivariate central limit theorem, we only have to compute the covariance matrix \( \Sigma\). For \(\{i,j\}\subseteq\{1,\ldots,k \} \), its entries are given by
		\begin{equation*}
			\begin{split}
				c_{i,j}&=\Cov\Bigl(\frac{\xi^{(i)}}{q_iM_p(q_i)}-\frac{\eta}{p}, \frac{\xi^{(j)}}{q_jM_p(q_j)}-\frac{\eta}{p}\Bigr)\\
				&=\frac{\Cov(\xi^{(i)},\xi^{(j)})}{q_i q_jM_p(q_i)M_p(q_j)}-\frac{1}{p}\Bigl(\frac{\Cov(\xi^{(i)},\eta)}{q_iM_p(q_i)}+\frac{\Cov(\eta,\xi^{(j)})}{q_jM_p(q_j)}\Bigr)+\frac{\Cov(\eta,\eta)}{p^2}\\
				&=\frac{C_p(q_i,q_j)}{q_i q_jM_p(q_i)M_p(q_j)}-\frac{1}{p}\Bigl(\frac{C_p(q_i,p)}{q_iM_p(q_i)}+\frac{C_p(q_j,p)}{q_jM_p(q_j)}\Bigr)+\frac{C_p(p,p)}{p^2},
			\end{split}
		\end{equation*}
	and this can be made explicit to get \Cref{eq:multisigma}.
	The remaining case of \(p=\infty\) can be repeated using the aforementioned conventions on the quantities \(M_\infty\) and \(C_\infty\).
\end{proof}
\begin{remark}
	From the proof is evident that in the case when \(\sqrt{n}(\pnorm{X}-1)\) converges in distribution to a random variable \(F\), independence yields, for every \(i\in\{1,\ldots,k\}\), the convergence in distribution
	\begin{equation*}
		\sqrt{n}\Bigl(n^{1/p-1/q_i}\frac{\norm{X}_{q_i}}{m_{p,q_i}}-1\Bigr)\dconv F+ N_i
	\end{equation*}
in which case the limiting random variable is not normal in general. Analogously, if there exists a sequence \((a_n)_{n\in\N}\),  \(a_n=o(\sqrt{n})\) and a random variable \(F\) such that 
	\begin{equation*}
	a_n(\pnorm{X}-1)\dconv F,
	\end{equation*}
	then the previous proof, with just a change of normalization, yields the limit theorem
	\begin{equation*}
			a_n\Bigl(n^{1/p-1/q}\frac{\norm{X}_{q}}{m_{p,q}}-1\Bigr)\dconv F
	\end{equation*}
	for every \(q\in[1,\infty)\), as $n\to\infty$.
\end{remark}

In analogy to \Cref{cor:intersectballs}, one can prove in a similar way the following result concerning the simultaneous intersection of several dilated \(\ell_p\text{-balls} \). In particular, we emphasize that the volume of the simultaneous intersection of three balls $\mathbb{D}_p^n\cap t_1\mathbb{D}_{q_1}^n\cap t_2\mathbb{D}_{q_2}$ is \textit{not} equal to $1/4$ if these balls are in `critical' position, as one might conjecture in view of \Cref{cor:intersectballs}.

\begin{corollary}
		Let \(n,k\in\N\) and \(p\in[1,\infty]\). Fix a \(k\text{-uple}\) \((q_1,\ldots,q_k)\in([1,\infty)\setminus\{p\})^k\) . Let \(t_1,
		\ldots,t_k\) be positive constants and define the sets \(I_\star\coloneqq\{i\in\{1,\ldots,k\}:A_{p,q_i}t_i\star 1\}\), where \(\star\) is one of the symbols \(>\), \(=\) or \(<\). Then,
		\begin{equation*}
\lim_{n\to\infty}\vol{\mathbb{D}_p^n\cap t_1\mathbb{D}_{q_1}^n\cap\cdots\cap t_k\mathbb{D}_{q_k}^n}=	
	\begin{cases}
			1								   &\caseif	\# I_>=k,\\
			\Pr(N_i\le 0:i\in I_=)	&\caseif    \# I_=\ge 1\text{ and } \# I_<=0,\\
			0								  &\caseif	    \# I_<\ge 1,
	\end{cases}
			\end{equation*}
			where \(N=(N_1,\ldots,N_k)\) is as 
			in \Cref{thm:multCLT}.
\end{corollary}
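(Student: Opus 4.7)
The plan is to mirror the one-dimensional argument of Corollary~\ref{cor:intersectballs}, replacing its univariate CLT input by the joint CLT of Theorem~\ref{thm:multCLT}(1). Let $X \sim \nu_p^n$. Applied coordinate by coordinate, the simultaneous rescaling by $\vol{\pball}^{1/n}$ and $\vol{\mathbb{B}_{q_i}^n}^{1/n}$ used in the proof of Corollary~\ref{cor:intersectballs} identifies
\begin{equation*}
\vol{\mathbb{D}_p^n \cap t_1 \mathbb{D}_{q_1}^n \cap \cdots \cap t_k \mathbb{D}_{q_k}^n} = \Pr\bigl(Y_n^{(i)} \le \beta_n^{(i)} \text{ for all } i = 1,\ldots,k\bigr),
\end{equation*}
where $Y_n^{(i)} := \sqrt{n}\bigl(n^{1/p - 1/q_i}\norm{X}_{q_i}/m_{p,q_i} - 1\bigr)$ and $\beta_n^{(i)} := \sqrt{n}(t_i A_{p,q_i,n} - 1)$. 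Since $X \sim \nu_p^n$ satisfies $\norm{X}_p \disteq U^{1/n}$, hence $\sqrt{n}(1 - \norm{X}_p) \pconv 0$, the hypothesis of Theorem~\ref{thm:multCLT}(1) is met and $(Y_n^{(1)}, \ldots, Y_n^{(k)}) \dconv (N_1, \ldots, N_k) \sim \mathcal{N}(0, \Sigma)$ jointly.

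From $A_{p,q_i,n} = A_{p,q_i} + \mathcal{O}(1/n)$ one reads off the limiting behaviour of each threshold: $\beta_n^{(i)} \to +\infty$ if $i \in I_>$, $\beta_n^{(i)} \to 0$ if $i \in I_=$, and $\beta_n^{(i)} \to -\infty$ if $i \in I_<$. If $\#I_< \ge 1$, fix any $j \in I_<$; for every $c \in \R$ one has $\Pr(Y_n^{(i)} \le \beta_n^{(i)} \text{ for all } i) \le \Pr(Y_n^{(j)} \le c)$ eventually, and Portmanteau followed by $c \to -\infty$ produces the limit $0$. If $\#I_> = k$, the probability is bounded below by $\Pr(Y_n^{(i)} \le M \text{ for all } i)$ for arbitrary fixed $M > 0$; Portmanteau (the boundary of $\{y_i \le M \ \forall\, i\}$ is a Gaussian null set) gives convergence to $\Pr(N_i \le M \text{ for all } i)$, and letting $M \to \infty$ yields the limit $1$.

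The delicate case is $\#I_= \ge 1$ and $\#I_< = 0$. Here I would sandwich the joint probability between
\begin{equation*}
\Pr\bigl(Y_n^{(i)} \le \beta_n^{(i)} \ \forall\, i \in I_=,\ Y_n^{(i)} \le M \ \forall\, i \in I_>\bigr) \quad \text{and} \quad \Pr\bigl(Y_n^{(i)} \le \beta_n^{(i)} \ \forall\, i \in I_=\bigr)
\end{equation*}
for fixed $M > 0$. On each side, joint weak convergence together with Slutsky's lemma (to absorb the vanishing shifts $\beta_n^{(i)}$, $i \in I_=$) and Portmanteau (whose continuity hypothesis holds because the boundary of the limiting event lies in the null set $\bigcup_{i \in I_=}\{N_i = 0\}$, each $N_i$ having positive variance $c_{i,i}$) let $n \to \infty$. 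Letting $M \to \infty$ then collapses both bounds to the common value $\Pr(N_i \le 0 : i \in I_=)$. The main obstacle is precisely this sandwich step: the indices in $I_>$ and in $I_=$ behave qualitatively differently in the limit and must be decoupled before invoking the multivariate CLT, since the former thresholds diverge while the latter vanish at the rate $\mathcal{O}(1/\sqrt{n})$.
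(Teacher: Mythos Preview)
Your proposal is correct and follows exactly the route the paper indicates: it mirrors the proof of \Cref{cor:intersectballs} with the univariate CLT replaced by the joint CLT of \Cref{thm:multCLT}(1), and the paper itself merely states that the corollary ``can be proved in a similar way'' without giving further details. Your handling of the three cases via sandwiching, Slutsky, and Portmanteau (with the null-boundary verification via $c_{i,i}=\sigma_{p,q_i}^2>0$ for $q_i\neq p$) fills in precisely the omitted details.
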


\subsubsection{Outlook -- the non-commutative setting}\label{subsubsec: outlook non-commutative schechtman-schmuckenschlaeger}

Very recently, Kabluchko, Prochno and Th\"ale obtained in \cite{KPT2018} a non-commutative analogue of the classical result by Schechtman and Schmuckenschl\"ager \cite{SS1991}. Instead of considering the family of $\ell_p^n$-balls, they studied the volumetric properties of unit balls in classes of classical matrix ensembles. 

More precisely, we let $\beta\in\{1,2,4\}$ and consider the collection $\mathscr H_n(\mathbb{F}_\beta)$ of all self-adjoint $n\times n$ matrices with entries from the (skew) field $\mathbb{F}_\beta$, where $\mathbb{F}_1=\R$, $\mathbb{F}_2=\mathbb C$ or $\mathbb{F}_4=\mathbb H$ (the set of Hamiltonian quaternions). By $\lambda_1(A),\ldots,\lambda_n(A)$ we denote the (real) eigenvalues of a matrix $A$ from $\mathscr H_n(\mathbb{F}_\beta)$ and consider the following matrix analogues of the classical $\ell_p^n$-balls discussed above:
\begin{equation*}
\mathbb{B}_{p,\beta}^n\coloneqq\Bigl\{A\in \mathscr H_n(\mathbb{F}_\beta):\sum_{j=1}^n\abs{\lambda_j(A)}^p \leq 1\Bigr\},\qquad \beta \in\{1,2,4 \}\quad\text{and}\quad p\in[1,\infty],
\end{equation*}
where we interpret the sum in brackets as $\max\{\lambda_j(A):j=1,\ldots,n\}$ if $p=\infty$. As in the case of the classical $\ell_p^n$-balls  we denote by $\mathbb D_{p,\beta}^n$, $\beta\in\{1,2,4\}$ the volume normalized versions of these matrix unit balls. Here the volume can be identified with the $(\beta\frac{n(n-1)}{2}+\beta n)$-dimensional Hausdorff measure on $\mathscr H_n(\mathbb{F}_\beta)$.

\begin{theorem}\label{thm:ApplInto}
Let $1\leq p, q <\infty$ with $p\neq q$ and $\beta\in\{1,2,4\}$. Then
\[
\lim_{n\to\infty}\vol{\mathbb D^n_{p,\beta}\cap t\, \mathbb D^n_{q,\beta}}=
\begin{cases}
0 &\caseif t < e^{\frac{1}{2p} - \frac{1}{2q}} \bigl(\frac{2p}{p+q}\bigr)^{1/q},\\
1 &\caseif t > e^{\frac{1}{2p} - \frac{1}{2q}} \bigl(\frac{2p}{p+q}\bigr)^{1/q}\,.
\end{cases}
\]
\end{theorem}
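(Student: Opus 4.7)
My strategy will be a non-commutative analogue of the argument behind \Cref{cor:intersectballs}, but with the central limit theorem for sums replaced by a law-of-large-numbers for $\beta$-ensembles coming from logarithmic-potential theory. As in the commutative case, the first step is to rewrite
\begin{equation*}
\vol{\mathbb{D}_{p,\beta}^n \cap t\, \mathbb{D}_{q,\beta}^n} = \Pr\bigl(\norm{\lambda(A)}_q \le t\, B_{p,q,\beta}^n\bigr),
\end{equation*}
where $A$ is uniformly distributed on $\mathbb{B}_{p,\beta}^n$, $\lambda(A) = (\lambda_1(A),\ldots,\lambda_n(A))$ is its eigenvalue vector, and $B_{p,q,\beta}^n$ is an explicit normalising constant built from the dimensions and volumes of $\mathbb{B}_{p,\beta}^n$ and $\mathbb{B}_{q,\beta}^n$.

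Next, I would apply the Weyl integration formula on $\mathscr H_n(\mathbb{F}_\beta)$ to show that the eigenvalue vector $\lambda(A)$ has a density on $\R^n$ proportional to the $\beta$-ensemble weight
\begin{equation*}
\prod_{1\le i<j\le n}\abs{\lambda_i-\lambda_j}^\beta \cdot \indic_{\{\sum_i\abs{\lambda_i}^p\le 1\}}(\lambda).
\end{equation*}
Rescaling $\lambda_i = n^{-1/p}x_i$, the constraint becomes $\oneover{n}\sum_i\abs{x_i}^p\le 1$, and the empirical measure $\mu_n\coloneqq\oneover{n}\sum_{i=1}^n\delta_{x_i}$ is then governed by a $\beta$-log-gas with external field $V(x) = \abs{x}^p$ under a hard moment constraint.

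The key probabilistic input would be a law-of-large-numbers for this ensemble: $\mu_n$ converges weakly in probability to the unique probability measure $\mu_p^\star$ on $\R$ minimising the logarithmic energy $\iint \log\oneover{\abs{x-y}}\de\mu(x)\de\mu(y)$ subject to $\int\abs{x}^p\de\mu(x)\le 1$. This is a standard consequence of the Ben Arous--Guionnet large deviations framework for $\beta$-ensembles with an external potential, adapted to a constrained variational problem. It would follow that for every $r\in[1,\infty)$,
\begin{equation*}
\oneover{n}\sum_{i=1}^n \abs{x_i}^r \pconv \int_{\R}\abs{x}^r \de\mu_p^\star(x),
\end{equation*}
which, together with Stirling-based asymptotics for $\vol{\mathbb{B}_{p,\beta}^n}$ (after Saint~Raymond), translates the event $\{\norm{\lambda(A)}_q \le t\, B_{p,q,\beta}^n\}$ into a deterministic inequality involving $t$ and an explicit critical threshold $t_c(p,q)$.

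The hard part will be the explicit identification of this threshold. The value $t_c(p,q) = e^{\frac{1}{2p}-\frac{1}{2q}}\bigl(\frac{2p}{p+q}\bigr)^{1/q}$ should split into two contributions: the exponential factor is expected to arise from Stirling's formula applied to the matrix $\ell_p^n$-ball volume, which involves products of Gamma values on arithmetic progressions (coming from the Vandermonde weight), while the factor $\bigl(\frac{2p}{p+q}\bigr)^{1/q}$ should encode the ratio $\bigl(\int\abs{x}^q\de\mu_p^\star\bigr)^{1/q}\big/\bigl(\int\abs{x}^p\de\mu_p^\star\bigr)^{1/p}$ of equilibrium moments. Carrying out the Euler--Lagrange analysis to pin down $\mu_p^\star$ and then evaluating these moments is the calculation-heavy core of the argument; once the ratio is in hand, concentration from the LLN immediately yields $\vol{\mathbb{D}_{p,\beta}^n\cap t\,\mathbb{D}_{q,\beta}^n}\to 0$ for $t<t_c(p,q)$ and $\to 1$ for $t>t_c(p,q)$.
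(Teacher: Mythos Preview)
Your proposal is correct and follows essentially the same route that the paper sketches: it too reduces the intersection volume to a tail probability for $\norm{\lambda(A)}_q$, invokes a weak law of large numbers for the empirical eigenvalue distribution of a uniform point in $\mathbb{B}_{p,\beta}^n$ (whose limit is the constrained equilibrium measure, i.e., the Ullman distribution), and combines this with the sharp volume asymptotics of $\mathbb{B}_{p,\beta}^n$ obtained via logarithmic-potential theory. One small clarification: the volume asymptotics are not a straightforward Stirling computation on a Gamma product (Saint~Raymond's argument covers the Schatten trace classes, not the self-adjoint ensembles here); rather, they themselves come from the free-energy expansion of the constrained $\beta$-log-gas, so the logarithmic-energy minimisation you mention for $\mu_p^\star$ is doing double duty---it produces both the exponential factor in $t_c(p,q)$ via the volume and the moment ratio $\bigl(\tfrac{2p}{p+q}\bigr)^{1/q}$.
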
 

To obtain this result, one first needs to study the asymptotic volume of the unit balls of $\mathscr H_n(\mathbb{F}_\beta)$. This is done by resorting to ideas from the theory of logarithmic potentials with external fields. The second ingredient is a weak law of large numbers for the eigenvalues of a matrix chosen uniformly at random from $\mathbb B_{p,\beta}^n$. For details we refer the interested reader to \cite{KPT2018}.

\section{Large deviations vs.\ large deviation principles}\label{sec5:LargeDeviations}

The final section is devoted to large deviations and large deviation principles for geometric characteristics of $\ell_p^n$-balls. We start by presenting some classical results on large deviations related to the geometry of $\ell_p^n$-balls due to Schechtman and Zinn. Its LDP counterpart has entered the stage of asymptotic geometry analysis only recently in \cite{KPT}. We then continue by presenting a large deviation principle for $1$-dimensional random projections of $\ell_p^n$-balls of Gantert, Kim and Ramanan \cite{GKR2017}. Finally, we present a similar result for higher-dimensional projections as well. 

\subsection{Classical results: Large deviations \`a la Schechtman-Zinn}\label{subsec51:ClassicalConcentrationIneq}


We start by rephrasing the large deviation inequality of Schechtman and Zinn \cite{SZ1}. It is concerned with the $\ell_q$-norm of a random vector in an $\ell_p^n$-balls. The proof that we present follows the argument of \cite{N}.

\begin{theorem}\label{thm:SZInequality}
	Let \(1\le p< q\le\infty\) and \(X\sim\nu_p^n\) or \(X\sim\mu_p^n\). Then there exists a constant \(c\in(0,\infty)\), depending only on \(p\) and \(q\), such that
	\begin{equation*}
		\Pr(n^{1/p-1/q}\norm{X}_q>z)\le\exp(-c\,n^{p/q} z^p),
	\end{equation*}
	for every \(z>1/c\).
\end{theorem}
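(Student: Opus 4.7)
The plan is to combine the probabilistic representation of \Cref{thm:cone} with a sub-exponential concentration estimate for $\norm{Z}_q^q$. By \Cref{thm:cone}, in the case $X\sim\nu_p^n$ one has $X\disteq U^{1/n}Z/\pnorm{Z}$ with $U\sim\mathrm{Unif}([0,1])$ and a $p$-generalized Gaussian vector $Z=(Z_1,\dots,Z_n)$ independent of $U$, while the case $X\sim\mu_p^n$ corresponds to omitting the factor $U^{1/n}$. Since $U^{1/n}\le 1$ almost surely, in both cases it suffices to prove
\[
\Pr\bigl(n^{1/p-1/q}\norm{Z}_q/\pnorm{Z}>z\bigr)\le\exp(-cn^{p/q}z^p).
\]

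Next I would split the event according to whether $\pnorm{Z}^p$ is close to its mean $n$, recalling that $\Ex|Z_1|^p=M_p(p)=1$. Since $\pnorm{Z}^p=\sum_{i=1}^n|Z_i|^p$ is an i.i.d.\ sum with finite exponential moments (in fact $|Z_1|^p\sim\Gamma(1/p,1/p)$), Cram\'er's \Cref{thm:Cramér} yields $\Pr(\pnorm{Z}^p<n/2)\le e^{-c_1 n}$ for some $c_1=c_1(p)>0$. On the complementary event $\pnorm{Z}\ge(n/2)^{1/p}$, so that
\[
\Pr\bigl(n^{1/p-1/q}\norm{Z}_q/\pnorm{Z}>z\bigr)\le e^{-c_1 n}+\Pr\bigl(\norm{Z}_q^q>2^{-q/p}nz^q\bigr).
\]

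The core of the proof is to bound the last tail probability. Writing $Y_i\coloneqq|Z_i|^q$, the density \eqref{eq:pGauss} yields the Weibull-type estimate $\Pr(Y_i>t)\le Ce^{-t^{p/q}/p}$ for large $t$, placing $Y_i$ in the Orlicz class $\psi_{p/q}$ with shape parameter $p/q\in(0,1)$. A Bernstein-type inequality for heavy-tailed sub-exponential sums then gives
\[
\Pr\bigl(\textstyle\sum_{i=1}^n(Y_i-\Ex Y_i)>t\bigr)\le\exp\bigl(-c\min(t^2/n,\,t^{p/q})\bigr),
\]
and setting $t=2^{-q/p}nz^q-nM_p(q)\asymp nz^q$ (valid once $z$ exceeds a constant depending on $p,q$) the quantity $t^{p/q}\asymp n^{p/q}z^p$ dominates $t^2/n\asymp nz^{2q}$ in the minimum, producing the desired $\exp(-cn^{p/q}z^p)$ bound. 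Combined with the $e^{-c_1 n}$ contribution, which is smaller since $n^{p/q}\le n$ and $z$ is bounded below, this concludes the argument.

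The hard part will be establishing the sub-exponential concentration inequality in the last display, because $Y_1$ has no finite moment generating function near the origin (the shape $p/q<1$ is strictly subcritical for Chernoff), so the classical exponential moment method is unavailable. The standard workaround is truncation at a level $T\asymp nz^q$: a union bound controls $\Pr(\exists i:Y_i>T)\le ne^{-T^{p/q}/p}\le e^{-c'n^{p/q}z^p}$ after absorbing the stray factor $n$ into the exponent for $z$ above a universal threshold, while the bounded i.i.d.\ sum $\sum_{i=1}^n Y_i\indic_{Y_i\le T}$ is handled by classical Bennett/Bernstein estimates for bounded summands. The degenerate case $q=\infty$ is treated in the same spirit: after the split one needs $\Pr(\norm{Z}_\infty>z\cdot 2^{-1/p})\le n\Pr(|Z_1|>z\cdot 2^{-1/p})\le nCe^{-z^p/(2p)}$ from a single union bound, and the factor $n$ is absorbed into the exponential for $z$ large enough, matching the $n^{p/q}=1$ speed predicted by the theorem.
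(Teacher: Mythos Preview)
Your reduction to the cone measure and the split on $\{\pnorm{Z}^p<n/2\}$ are fine, and the overall route---isolate a tail bound for $S_q=\sum_i|Z_i|^q$ alone---is different from the paper's. The paper applies the exponential Markov inequality directly to $S_q^{p/q}/S_p$, uses the independence of this ratio from $S_p$ together with Jensen's inequality to replace $S_p$ by $\Ex S_p=n$, and then invokes Naor's estimate $\Ex\bigl[e^{tS_q^{p/q}}\bigr]\le n^{1-p/q}(1-ct)^{-n^{p/q}}$. So the paper works with the Laplace transform of $S_q^{p/q}$ rather than with a tail bound for $S_q$.

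The gap in your argument is in the step you yourself flag as hard. With the truncation level $T\asymp nz^q$ that you need for the union-bound term, classical Bennett/Bernstein on the bounded sum $\sum_iY_i\indic_{\{Y_i\le T\}}$ gives an exponent of order
\[
\frac{s^2}{n\sigma^2+Ts}\;\asymp\;\frac{s^2}{Ts}\;=\;\frac{s}{T}\;\asymp\;1,
\]
since $s\asymp nz^q\asymp T$: a single truncated summand can already be of size $T\asymp s$, so Bernstein sees no concentration whatsoever. No choice of a single truncation level works: to make the union bound give $e^{-c n^{p/q}z^p}$ you need $T\gtrsim nz^q$, whereas to make Bernstein give the same you need $T\lesssim n^{1-p/q}z^{q-p}$, and these ranges are disjoint once $n^{p/q}z^p>1$. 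The stretched-exponential concentration inequality you wrote down is essentially correct (this is the ``one big jump'' regime for sums with Weibull tails of shape $p/q<1$), but its proof requires either Fuk--Nagaev type estimates or a multi-scale argument, not a single truncation plus Bernstein. Alternatively, bounding $\Ex\bigl[e^{tS_q^{p/q}}\bigr]$ directly as in Naor is precisely what sidesteps this difficulty, because passing to the $p/q$-th power makes the summands sub-exponential again.

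A minor point: your claim that the $e^{-c_1 n}$ contribution is automatically smaller than $e^{-cn^{p/q}z^p}$ because ``$n^{p/q}\le n$ and $z$ is bounded below'' is incomplete for large $z$; you need to observe that $n^{1/p-1/q}\qnorm{X}\le n^{1/p-1/q}$ deterministically, so the probability vanishes once $z^p\ge n^{1-p/q}$, and on the remaining range $n^{p/q}z^p\le n$.
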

\begin{proof}
We sketch the proof for the case that $X\sim\mu_p^n$. Let \(Z_1,\ldots,Z_n\) be \(p\text{-generalized}\) Gaussian random variables and put $S_r\coloneqq\abs{Z_1}^r+\ldots+\abs{Z_n}^r$ for $r\geq 1$. Now observe that by the exponential Markov inequality and \Cref{thm:cone}, for $t>0$,
\begin{align*}
\Pr(n^{1/p-1/q}\norm{X}_q>z) &=\Pr\Bigl(\frac{S_q^{p/q}}{S_p}>\frac{z^p}{n^{1-p/q}}\Bigr)\\ &\leq\exp\Bigl(-\frac{tz^p}{n^{1-p/q}}\Bigr)\Ex\exp\Bigl(t\frac{S_q^{p/q}}{S_p}\Bigr)\\
&\leq\exp\Bigl(-\frac{tz^p}{n^{1-p/q}}\Bigr)\Ex\exp\Bigl(t\frac{S_q^{p/q}}{\Ex S_p}\Bigr),
\end{align*}
where we also used the independence property in \Cref{thm:cone} in the last step. Next, we observe that $\Ex S_p=n$ by \Cref{lem:MOMENTSpGenGAUSSIAN}. Moreover from \cite[Corollary 3]{N} it is known that there exists a constant $c\in(0,\infty)$ only depending on $p$ and $q$ such that
\begin{equation*}
\Ex\exp\bigl(tS_q^{p/q}\bigr) \leq n^{1-p/q}\bigl(1-ct\bigr)^{-n^{p/q}}
\end{equation*}
as long as $0<t<1/c$. Thus, choosing $t=\frac{n}{c}-\frac{n}{z^p}$ we arrive at
\begin{equation*}
\Pr(n^{1/p-1/q}\norm{X}_q>z) \leq n^{1-p/q}\Bigl(\frac{ez^p}{c}\Bigr)^{n^{p/q}}\exp(-cn^{p/q}z^p).
\end{equation*}
This implies the result.
\end{proof}

\subsection{Recent developments}\label{subsec52:RecentDevelopmentsLDPs}

\subsubsection{The LDP counterpart to Schechtman-Zinn}

After having presented the classical Schechtman-Zinn large deviation inequality, we turn now to a LDP counterpart. The next result is a summary of the results presented in from \cite[Theorems from 1.2 to 1.5]{KPT}. The speed and the rate function in its part 4 resembles the right hand side of the inequality in \Cref{thm:SZInequality}.

\begin{theorem}
	Let \(n\in\N\), \(p\in[1,\infty] \), \(q\in[1,\infty)\) and \(X\sim\nu_p^n\). Define the sequence 
	\begin{equation*}
	\norm{\XX}\coloneqq(n^{1/p-1/q}\qnorm{X})_{n\in\N}.
	\end{equation*}
	\begin{enumerate}
		\item If \(q<p<\infty\), then \(\norm{\XX}\) satisfies an LDP with speed \(n\) and good rate function 
		\begin{equation*}
			\rate_{\norm{\XX }}(x)=\begin{cases}
					\inf\{ \rate_1(x_1)+\rate_2(x_2):x=x_1x_2, x_1\ge 0, x_2\ge 0\}&\caseif x\ge 0,\\
					+\infty&\other.
				\end{cases}
		\end{equation*}	
		Here
		\begin{equation}
		\label{eq:rate1}
			\rate_1(x)=\begin{cases}
			-\log(x)&\caseif x\in(0,1],\\
			+\infty&\other,
			\end{cases}
		\end{equation}
		and
		\begin{equation*}
			\rate_2(x)=\begin{cases}
				\inf\{\Lambda^*(y,z):x=y^{1/q}z^{-1/p}, y\ge 0, z\ge 0 \}&\caseif x\ge 0\\
				+\infty&\other,
			\end{cases}
		\end{equation*}
		where \(\Lambda^* \) is the Fenchel-Legendre transform of the function 
		\begin{equation*}
			\Lambda(t_1,t_2)\coloneqq\log\int_0^{+\infty} \oneover{p^{1/p}\Gamma(1+1/p)}e^{t_1 s^q+(t_2-1/p) s^p }\de s,\qquad (t_1,t_2)\in\R\times\Bigl(-\infty,\oneover{p}\Bigr).
		\end{equation*}
	\item If \(q<p=\infty\), then \(\norm{\XX}\) satisfies an LDP with speed \(n\) and good rate function 
	\begin{equation*}
		\rate_{\norm{\XX }}(x)=\begin{cases}
		\Psi^*(x)&\caseif x\ge 0,\\
		+\infty&\other,
		\end{cases}
	\end{equation*}
	where \(\Psi^*\) is the Fenchel-Legendre transform of the function 
	\begin{equation*}
		\Psi(t)\coloneqq\int_0^1 e^{t s^q}\de s,\qquad t\in\R.
	\end{equation*}
	\item If \(p=q\), then \(\norm{\XX}\) satisfies an LDP with speed \(n\) and good rate function \(\rate_1 \) defined in \Cref{eq:rate1}.
	\item If \(p<q\), then \(\norm{\XX} \) satisfies an LDP with speed \(n^{p/q}\) and good rate function
	\begin{equation*}
		\rate_{\norm{\XX }}(x)=\begin{dcases}
			\oneover{p}\bigl(x^q-m_{p,q}^q\bigr)^{p/q}&\caseif x\ge m_{p,q},\\
			+\infty&\other.
		\end{dcases}
	\end{equation*}
	\end{enumerate}
	
\end{theorem}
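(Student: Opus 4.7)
The plan is to reduce to the i.i.d.\ setting via the probabilistic representation of \Cref{thm:cone}. Let $(Z_i)_{i\in\N}$ be i.i.d.\ $p$-generalized Gaussian random variables and $U\sim\mathrm{Unif}([0,1])$ independent of them; writing $X\disteq U^{1/n}Z/\pnorm{Z}$ with $Z=(Z_1,\ldots,Z_n)$ one gets
\begin{equation*}
n^{1/p-1/q}\qnorm{X}\disteq U^{1/n}\cdot\frac{\bigl(\oneover{n}\sum_{i=1}^n\abs{Z_i}^q\bigr)^{1/q}}{\bigl(\oneover{n}\sum_{i=1}^n\abs{Z_i}^p\bigr)^{1/p}}\eqqcolon U^{1/n}R_n,
\end{equation*}
with $U^{1/n}$ and $R_n$ independent. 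A direct computation gives $\Pr(U^{1/n}\le x)=x^n$ for $x\in(0,1]$, so $(U^{1/n})_{n\in\N}$ satisfies an LDP at speed $n$ with good rate function $\rate_1$ from \Cref{eq:rate1}. The case $p=\infty$ will be handled separately by noting that $\nu_\infty^n$ is the uniform distribution on $[-1,1]^n$, so we may take $X=Z$ with $Z_i\sim\mathrm{Unif}([-1,1])$, bypassing the $U^{1/n}$ factor entirely.

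For parts 1 and 3 ($q\le p<\infty$) the idea is to apply Cram\'er's theorem to the $\R^2$-valued empirical means
\begin{equation*}
T_n\coloneqq\Bigl(\oneover{n}\sum_{i=1}^n\abs{Z_i}^q,\;\oneover{n}\sum_{i=1}^n\abs{Z_i}^p\Bigr).
\end{equation*}
The cumulant generating function of $(\abs{Z_1}^q,\abs{Z_1}^p)$ is exactly $\Lambda$ as stated, and it is finite in a neighbourhood of the origin: after absorbing the Gaussian factor $e^{-s^p/p}$ from the density $f_p$, the integrand becomes $e^{t_1 s^q+(t_2-1/p)s^p}$, which is integrable for $t_2<1/p$ precisely because $q\le p$. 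Thus $T_n$ satisfies an LDP at speed $n$ with good rate function $\Lambda^*$. Applying the contraction principle (\Cref{prop:contraction principle}) with the continuous map $F(y_1,y_2)=y_1^{1/q}y_2^{-1/p}$ on $(0,\infty)^2$ yields the LDP for $R_n$ with rate $\rate_2$. Combining with the LDP for $U^{1/n}$ via independence (take the product LDP in $\R^2$ with rate $\rate_1(x_1)+\rate_2(x_2)$ and contract by $(x_1,x_2)\mapsto x_1x_2$) produces the inf-convolution rate in part~1; in part~3 one additionally uses that $p=q$ forces $R_n\equiv 1$, leaving only the $U^{1/n}$ contribution. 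Part~2 is analogous, but applies Cram\'er directly to the one-dimensional means $n^{-1}\sum\abs{Z_i}^q$ and then contracts via $y\mapsto y^{1/q}$.

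The genuinely different and hardest case is part~4 ($p<q$). Since $|Z_1|$ has Weibull-type tail of exponent $p<q$, the MGF of $\abs{Z_1}^q$ is infinite for every $t>0$, so Cram\'er is unavailable. The atypical event $\{n^{1/p-1/q}\qnorm{X}\ge x\}$ with $x>m_{p,q}$ is caused by a small number of extremely large coordinates whose individual deviations occur with probability of order $e^{-cn^{p/q}}$, which explains the slower speed $n^{p/q}$. At this speed, both $U^{1/n}$ and the denominator $(n^{-1}\sum\abs{Z_i}^p)^{1/p}$ concentrate at $1$ super-exponentially fast and are therefore exponentially negligible; only the numerator $(n^{-1}\sum\abs{Z_i}^q)^{1/q}$ contributes. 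My plan is to establish the LDP at speed $n^{p/q}$ for this empirical mean via a truncation argument: split each $\abs{Z_i}^q$ at a level $\tau_n$ growing slowly, handle the bulk through its deterministic limit $m_{p,q}^q$, and obtain the large-deviation cost from the exact Weibull tail asymptotics of the remainder, arriving at the rate $x\mapsto(x^q-m_{p,q}^q)^{p/q}/p$. The main obstacle is precisely this step: the breakdown of Cram\'er forces a direct tail analysis or a G\"artner--Ellis computation along a rescaled MGF, together with a careful gluing of the two regimes via exponential equivalence, while simultaneously verifying that the $U^{1/n}$ and denominator factors can indeed be absorbed without affecting either the upper or the lower bound at this slower speed.
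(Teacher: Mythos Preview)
The paper does not actually prove this theorem: it is stated without proof as ``a summary of the results presented in \cite[Theorems 1.2--1.5]{KPT}''. So there is no proof in the paper to compare against.

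That said, your outline is the natural one and matches the strategy of the original source. The representation $n^{1/p-1/q}\qnorm{X}\disteq U^{1/n}R_n$, Cram\'er for the pair $T_n$, and contraction to $R_n$ followed by an independent product with $U^{1/n}$ is exactly how parts 1--3 are obtained; your handling of $p=\infty$ by working directly with i.i.d.\ $\mathrm{Unif}([-1,1])$ coordinates is also the right shortcut. One technical point worth flagging: the map $(y_1,y_2)\mapsto y_1^{1/q}y_2^{-1/p}$ is not continuous on all of $\R^2$, so the contraction principle does not apply verbatim; one has to argue that $\Lambda^*$ is $+\infty$ outside $(0,\infty)^2$ (or use exponential equivalence to restrict to a domain where $F$ is continuous). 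This is routine but should be mentioned.

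For part 4 your diagnosis is correct: the MGF of $\abs{Z_1}^q$ blows up, the relevant speed drops to $n^{p/q}$, and the factors $U^{1/n}$ and $(n^{-1}\sum\abs{Z_i}^p)^{1/p}$ are exponentially equivalent to $1$ at that speed. The actual LDP for $n^{-1}\sum\abs{Z_i}^q$ is a known result for sums of i.i.d.\ stretched-exponential (Weibull-tailed) random variables; rather than redo the truncation analysis from scratch, you can invoke such a result directly (this is what \cite{KPT} does), which immediately gives the rate $y\mapsto p^{-1}(y-m_{p,q}^q)^{p/q}$ for $y\ge m_{p,q}^q$, and then contract via $y\mapsto y^{1/q}$.
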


\subsubsection{LDPs for projections of $\ell_p^n$-balls -- $1$-dimensional projections}

We turn now to a different type of large deviation principles. More precisely, we consider random projections of points uniformly distributed in an $\ell_p^n$-ball or distributed according to the corresponding cone probability measure onto a uniform random direction. The following result is a summary of from \cite[Theorems 2.2,2.3]{GKR2017}. The proof of the first part follows rather directly from Cramér's theorem (\Cref{thm:Cramér}) and the contraction principle (\Cref{prop:contraction principle}), the second part is based on large deviation theory for sums of stretched exponentials. 

\begin{theorem}
	Let \(n\in\N\) and \(p\in[1,\infty) \). Let \(X\sim\nu_p^n \) or \(X\sim\mu_p^n \) and \(\Theta\sim\sigma_2^n \) be independent random vectors. Consider the sequence
	\begin{equation*}
		\WW\coloneqq(n^{1/p-1/2}\scalar{X}{\Theta})_{n\in\N}.
	\end{equation*}
	\begin{enumerate}
		\item If \(p\ge 2\), then \( \WW\) satisfies an LDP with speed \(n\) and good rate function 
		\begin{equation*}
			\rate_\WW(w)=\inf\{\Phi^*(\tau_0,\tau_1,\tau_2):w=\tau_0^{-1/2}\tau_1^{\vphantom{1/2}}\tau_2^{-1/p}, \tau_0>0, \tau_1\in\R, \tau_2>0\},
		\end{equation*}
		where \(\Phi^* \) is the Fenchel-Legendre transform of 
		\begin{equation*}
			\Phi(t_0,t_1,t_2)\coloneqq\log\int_\R\int_\R e^{t_0 z^2+t_1 zy+t_2\abs{z}^p }f_2(z)f_p(y)\de z\de y,\qquad t_0,t_1,t_2\in\R.
		\end{equation*}
		\item If \(p< 2\), then \( \WW\) satisfies an LDP with speed \(n^{2p/(2+p)}\) and good rate function 
			\begin{equation*}
					\rate_\WW(w)=\frac{2+p}{2p}\abs{w}^{2p/(2+p)}.
			\end{equation*}
	\end{enumerate}
\end{theorem}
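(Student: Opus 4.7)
The strategy for both cases is to unwind \(W_n\) into a functional of i.i.d.\ sums via the Schechtman-Zinn representation (\Cref{thm:cone}). I would take i.i.d.\ standard Gaussians \((Y_i)_{i\in\N}\) and i.i.d.\ \(p\text{-generalized}\) Gaussians \((Z_i)_{i\in\N}\), mutually independent, together with \(U\sim\mathrm{Unif}([0,1])\) independent of the rest. By \Cref{thm:cone} (applied to \(p=2\) and to the given \(p\)) one can realise \(\Theta \disteq (Y_1,\ldots,Y_n)/\norm{(Y_1,\ldots,Y_n)}_2\) and \(X\disteq R\,(Z_1,\ldots,Z_n)/\pnorm{(Z_1,\ldots,Z_n)}\) with \(R=U^{1/n}\) if \(X\sim\nu_p^n\) and \(R=1\) if \(X\sim\mu_p^n\). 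A direct rearrangement absorbs the prefactor \(n^{1/p-1/2}\) into the empirical means and yields
\begin{equation*}
W_n \disteq R\cdot F(S_n),\qquad F(\tau_0,\tau_1,\tau_2)\coloneqq\tau_1\tau_0^{-1/2}\tau_2^{-1/p},
\end{equation*}
where \(S_n\coloneqq\tfrac{1}{n}\sum_{i=1}^n(Y_i^2,Z_iY_i,\abs{Z_i}^p)\in\R^3\) and \(F\) is continuous on the open set \(\{\tau_0>0,\tau_2>0\}\).

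For Part 1 (\(p\geq 2\)), the cumulant generating function \(\Phi(t_0,t_1,t_2)=\log\Ex[\exp(t_0 Y_1^2+t_1 Z_1Y_1+t_2\abs{Z_1}^p)]\) is finite in a neighbourhood of the origin: integrating the Gaussian factor in \(Y_1\) first (valid for \(t_0<1/2\)) leaves an integrand in \(z\) of the type \(\exp\bigl(\tfrac{t_1^2 z^2}{2(1-2t_0)}+t_2\abs{z}^p\bigr)\), which is integrable against \(f_p\) when \(p\geq 2\) and the \(t_i\) are small. \Cref{thm:Cramér} then produces an LDP for \(S_n\) at speed \(n\) with good rate \(\Phi^*\), and the contraction principle (\Cref{prop:contraction principle}) applied to the continuous map \(F\) yields the rate function claimed in the statement. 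When \(X\sim\nu_p^n\) one additionally has the factor \(R=U^{1/n}\), which satisfies its own independent LDP at speed \(n\) with rate \(-\log u\) on \((0,1]\); combining the two LDPs through the joint contraction \((u,\tau_0,\tau_1,\tau_2)\mapsto u\tau_1\tau_0^{-1/2}\tau_2^{-1/p}\), the task is to verify that the infimum is attained at \(u=1\), so that the rate function is the same as in the \(\mu_p^n\) case.

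For Part 2 (\(p<2\)) this route breaks down at the first step: for every \(t\neq 0\) one has \(\Ex[e^{tZ_1Y_1}]=\Ex[e^{t^2Z_1^2/2}]=\infty\), because \(\abs{z}^p\) grows strictly slower than \(z^2\). Consequently \(\Phi\) is infinite in any neighbourhood of \(0\), and the LDP for the middle coordinate of \(S_n\) has to come from a Weibull-type tail analysis. A saddle-point estimate, optimising \(z^p/p+t^2/(2z^2)\) over \(z>0\) (the optimum is at \(z\asymp t^{2/(p+2)}\)), gives the tail \(\Pr(\abs{Z_1Y_1}>t)\) of order \(\exp\bigl(-\tfrac{p+2}{2p}t^{2p/(p+2)}\bigr)\) as \(t\to\infty\). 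Invoking the large deviation theory for sums of i.i.d.\ variables with stretched-exponential tails, in the form used in \cite{GKR2017}, one then obtains an LDP for \(n^{-1}\sum_{i=1}^n Z_iY_i\) at speed \(n^{2p/(2+p)}\) with rate \(\tfrac{2+p}{2p}\abs{w}^{2p/(2+p)}\). The normalisers \(n^{-1}\sum Y_i^2\), \(n^{-1}\sum\abs{Z_i}^p\) and \(U^{1/n}\) each concentrate at \(1\) exponentially fast at the faster speed \(n\); since \(n\gg n^{2p/(2+p)}\) for \(p<2\), an exponential-equivalence argument (cf.\ \cite[Theorem~4.2.13]{DZ}) transfers the LDP from \(n^{-1}\sum Z_iY_i\) to \(W_n\) without altering the speed or the rate function.

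The main obstacle is Part 2 --- producing the sharp stretched-exponential tail for \(Z_1Y_1\) with the precise constant \(\tfrac{p+2}{2p}\), and then carrying out the exponential-equivalence step rigorously. Concretely, one must show that the joint deviations of the three normalising factors away from \(1\) cost a rate of strictly higher order than \(n^{2p/(2+p)}\), uniformly in the target deviation of \(W_n\); this hinges on exponential tightness at speed \(n\) together with the continuity of \(F\) on the set \(\{\tau_0,\tau_2>0\}\) where the rate function is finite, so that the replacement of the normalisers by their means introduces no additional contribution to the final rate.
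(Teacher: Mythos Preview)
Your proposal is correct and follows essentially the same route as the paper's sketch: the Schechtman--Zinn representation to rewrite \(W_n\) as a continuous function of the empirical mean of \((Y_i^2,Z_iY_i,\abs{Z_i}^p)\), Cram\'er plus contraction for \(p\ge 2\), and the stretched-exponential tail of \(Z_1Y_1\) together with exponential equivalence of the normalisers for \(p<2\). Your treatment of the factor \(U^{1/n}\) via a joint contraction with the rate \(-\log u\) (and the observation that the infimum sits at \(u=1\)) is exactly the content of the lemma the paper cites from \cite{GKR2017}, and your Part~2 outline in fact supplies more detail than the paper, which only points to the stretched-exponential machinery without spelling out the saddle-point constant or the exponential-equivalence step.
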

\begin{proof}
Let us sketch the proof for the case that $p>2$, by leaving out any technical details. For this, let $Z_1,\ldots,Z_n$ be $p$-generalized Gaussian random variables, $G_1,\ldots,G_n$ be Gaussian random variables and \(U\) be a uniform random variable over \([0,1]\). Also assume that all the aforementioned random variables are independent. Also put $Z\coloneqq(Z_1,\ldots,Z_n)$ and $G\coloneqq(G_1,\ldots,G_n)$. When \(X\sim\mu_p^n\), by \Cref{thm:cone}, we can state that for each $n\in\N$ the target random variable $n^{1/p-1/2}\scalar{X}{\Theta}$ has the same distribution as
\begin{equation}
\label{eq:coneX}
n^{1/p-1/2}\frac{\sum\limits_{i=1}^nG_iZ_i}{\norm{G}_2\pnorm{Z}} = \frac{\oneover{ n}\sum\limits_{i=1}^nG_iZ_i}{\Bigl( \oneover{n}\sum\limits_{i=1}^n\abs{G_i}^2\Bigr)^{1/2}\Bigl(\frac{1} {n}\sum\limits_{i=1}^n\abs{Z_i}^p\Bigr)^{1/p}}.
\end{equation}
Note that \(\Phi\) is finite whenever \(p<2\), \(t_0<1/2\), \(t_1\in\R\) and \(t_2<1/p\). Then,
Cramér's theorem (\Cref{thm:Cramér}) shows that the \(\R^3\text{-valued}\) sum
\begin{equation*}
\oneover{n}\sum_{i=1}^n\bigl(\abs{G_i}^2,G_iZ_i,\abs{Z_i}^p\bigr)
\end{equation*}
satisfies an LDP with speed $n$ and rate function $\Phi^*$. Applying the contraction principle (\Cref{prop:contraction principle}) to the function $F(x,y,z)=x^{-1/2}yz^{-1/p}$ yields the LDP for $\WW$ with speed $n$ and the desired rate function $\rate_{\mathbf{W}}$. Once the LDP is proven for the cone measure, it can be pushed to the case of the uniform measure. By \Cref{thm:cone}, multiplying the expression in \Cref{eq:coneX} by \(U^{1/n}\), we obtain a random variable distributed according to \( \nu_p^n\). It is proven in \cite[Lemma 3.2]{GKR2017} that multiplying by \(U^{1/n}\) every element of the sequence \(\mathbf{W}\), we obtain a new sequence of random variables that also satisfies an LDP with the same speed and the same rate function as \(\mathbf{W}\). On the other hand, when \(p<2\), \(\Phi(t_0,t_1,t_2)=\infty\) for any \(t_1\neq 0\), hence suggesting that in this case the LDP could only occur at a lower speed than \(n\).
\end{proof}

\subsubsection{LDPs for projections of $\ell_p^n$-balls -- the Grassmannian setting}

Finally, let us discuss projections to higher dimensional subspaces, generalizing thereby the set-up from the previous section. We adopt the Grassmannian setting and consider the $2$-norm of the projection to a uniformly distributed random subspace in the Grassmannian $\mathbb{G}_n^k$ of $k$-dimensional subspaces of $\R^n$ of a point uniformly distributed in the $\ell_p^n$-unit ball. Since we are interested in the asymptotic regime where $n\to\infty$, we also allow the subspace dimension $k$ to vary with $n$. However, in order to keep our notation transparent, we shall nevertheless write $k$ instead of $k(n)$. The next result is the collection of \cite[Theorems 1.1,1.2]{APT}.

\begin{theorem}
	Let \(n\in\N\). Fix \(p\in[1,\infty]\) and a sequence \(k=k(n)\in\{1,\ldots,n-1\}\) such that the limit \(\lambda\coloneqq\lim_{n\to\infty} (k/n)\) exists. Let \(P_E X\) be the orthogonal projection of a random vector \(X\sim\nu_p^n\) onto a random independent linear subspace \(E\sim\eta_k^n\). Consider the sequence
	\begin{equation*}
		\bPEX\coloneqq(n^{1/p-1/2}\norm{P_E X }_2)_{n\in\N}.
	\end{equation*}
	\begin{enumerate}
	\item	If \(p\ge 2\), then \(\bPEX \) satisfies an LDP with speed \(n\) and good rate function
	\begin{equation*}
	\rate_{\bPEX}(y)\coloneqq\begin{dcases}
		\inf_{x> y}\Big[{\frac{\lambda}{2}}\log\Bigl(\frac{\lambda x^2}{ y^2}\Bigr)+\frac{1-\lambda}{2}\log\Bigl(\frac{1-\lambda}{ 1-y^2x^{-2}}\Bigr)+\mathcal{J}_p(x)\Bigr] &\!\!\!\!\caseif y>0,\\
		\mathcal{J}_p(0) &\!\!\!\!\caseif y=0,\, \lambda\in(0,1],\\
		\inf\limits_{x\geq 0}\mathcal{J}_p(x) &\!\!\!\!\caseif y=0,\, \lambda=0,\\
		+\infty &\!\!\!\!\caseif y<0\,,
		\end{dcases}
	\end{equation*}
	where we use the convention \(0\log 0\coloneqq 0\) and for \(p\neq\infty\) we have
	\begin{equation*}
	\mathcal{J}_p(y)\coloneqq\inf_{\substack{x_1, x_2>0\\ x_1^{1/2}x_2^{-1/p}=y}}\mathcal{I}_p^*(x_1,x_2),\qquad y\in\R\,,
	\end{equation*}
	and $\mathcal{I}_p^*(x_1,x_2)$ is the Fenchel-Legendre transform of
	\begin{equation*}
	\rate_p(t_1,t_2)\coloneqq\log\int_{\R}e^{t_1x^2+t_2\abs{x}^p}f_p(x)\de x,\qquad (t_1,t_2)\in\R\times\Bigl(-\infty,\oneover{p}\Bigr).
	\end{equation*}
	 For $p=\infty$, we write $\mathcal{J}_\infty(y)\coloneqq\rate_\infty^*(y^2)$ with $\rate_\infty^*$ being the Fenchel-Legendre transform of $\rate_\infty(t)\coloneqq\log\int_0^1e^{tx^2}\de x$.
	 \item If \(p<2\) and \(\lambda>0\), then \(\bPEX \) satisfies and LDP with speed \(n^{p/2}\) and good rate function
	 \begin{equation*}
	 \rate_{\bPEX}(y)\coloneqq\begin{dcases}
	 	\oneover{p}\Bigl(\frac{y^2}{\lambda}-m\Bigr)^{p/2}&\caseif y\ge\sqrt{\lambda m_p}\,,\\
	 	+\infty &\other,
	 \end{dcases}
	 \end{equation*}
	 where \(m_p\coloneqq p^{p/2}\Gamma(1+3/p)/(3\Gamma(1+1/p))\).
		\end{enumerate}
\end{theorem}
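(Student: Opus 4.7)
My plan is to combine the probabilistic representation of \Cref{thm:cone} with the rotational invariance of the Haar measure on the Grassmannian. By \Cref{thm:cone} I write $X\disteq U^{1/n}Z/\pnorm{Z}$ with $Z=(Z_1,\ldots,Z_n)$ having i.i.d.\ $p$-generalized Gaussian coordinates and $U\sim\mathrm{Unif}([0,1])$ independent of $Z$. Since $\eta_k^n$ is rotation invariant and $E$ is independent of $X$, conditioning on $X$ shows that $\norm{P_EX}_2^2/\norm{X}_2^2$ has the same distribution as $\norm{P_Ee_1}_2^2\sim\beta(k/2,(n-k)/2)$, independently of $X$. Hence
\begin{equation*}
n^{1/p-1/2}\norm{P_EX}_2\disteq U^{1/n}\cdot\frac{(n^{-1}\sum_{i=1}^n Z_i^2)^{1/2}}{(n^{-1}\sum_{i=1}^n\abs{Z_i}^p)^{1/p}}\cdot B_n^{1/2},\qquad B_n\sim\beta(k/2,(n-k)/2),
\end{equation*}
with $B_n$ independent of the rest. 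The task thus reduces to proving an LDP for the right-hand side.

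\textbf{Case $p\ge 2$.} I would run the argument in parallel with the one-dimensional projection LDP sketched after \Cref{thm:Cramér}. Since $\rate_p(t_1,t_2)$ is finite in a neighborhood of the origin precisely because $p\ge 2$, Cramér's theorem gives an LDP at speed $n$ for the pair $(n^{-1}\sum Z_i^2,\,n^{-1}\sum\abs{Z_i}^p)$ with good rate function $\rate_p^*$. Independently, a Stirling-based computation on the Beta density shows that $(B_n)_{n\in\N}$ satisfies an LDP at speed $n$ with good rate function
\begin{equation*}
b\,\mapsto\,\tfrac{\lambda}{2}\log(\lambda/b)+\tfrac{1-\lambda}{2}\log((1-\lambda)/(1-b)),\qquad b\in(0,1),
\end{equation*}
and $+\infty$ otherwise. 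Independence upgrades these to a joint LDP at speed $n$, and the contraction principle (\Cref{prop:contraction principle}) applied to the continuous map $(x_1,x_2,b)\mapsto x_1^{1/2}x_2^{-1/p}b^{1/2}$ produces the stated formula, since the intermediate contraction $(x_1,x_2)\mapsto x_1^{1/2}x_2^{-1/p}$ yields exactly $\mathcal{J}_p$ in the statement. The residual factor $U^{1/n}$ is absorbed in the spirit of \cite[Lemma~3.2]{GKR2017}: its rate $-\log u$ vanishes at $u=1$ and the resulting extra infimum over $u\in(0,1]$ collapses onto the boundary because of the monotonicity of the inner rate in the relevant region. The degenerate regimes $y=0$, $\lambda\in\{0,1\}$ and $p=\infty$ are handled by minor explicit modifications (for $p=\infty$, $Z_i$ is uniform on $[-1,1]$ so only the single moment generating function $\rate_\infty$ appears).

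\textbf{Case $p<2$ and $\lambda>0$.} Now Cramér's theorem fails because $\Ex[e^{tZ_1^2}]=\infty$ for every $t>0$, so the LDP operates only at the slower speed $n^{p/2}$. Here I would invoke the large-deviation theory for sums of stretched-exponential random variables with Weibull exponent $p/2<1$: upward deviations of $n^{-1}\sum_{i=1}^n Z_i^2$ are governed by a single coordinate of order $n^{1/2}$, giving an LDP at speed $n^{p/2}$ with rate function $x\mapsto\tfrac1p(x-m_p)^{p/2}$ on $[m_p,\infty)$. The remaining factors $n^{-1}\sum\abs{Z_i}^p\to 1$, $B_n\to\lambda$ and $U^{1/n}\to 1$ are exponentially concentrated at the faster speed $n$, so that on the scale $n^{p/2}$ they act as deterministic constants and contribute only through their limits. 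Plugging these into the decomposition above and contracting via $(x,b)\mapsto\sqrt{xb}$ then produces the rate function $y\mapsto \tfrac1p(y^2/\lambda-m_p)^{p/2}$ on $[\sqrt{\lambda m_p},\infty)$.

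\textbf{Main obstacle.} The principal technical difficulty is to reconcile components living on different exponential scales, most acutely in the regime $p<2$ where $n^{p/2}$ must be merged with $n$; this requires carefully justifying that the faster-concentrated factors may be frozen at their typical values. A secondary delicate issue, present in both cases, is the proper absorption of the $U^{1/n}$ factor and the analysis of the boundary behavior of the Beta LDP as $b\to 0$ or $b\to 1$ (corresponding to $y=0$ or $\lambda\in\{0,1\}$), where compactness and exponential tightness arguments in the spirit of \cite{GKR2017,APT} come into play.
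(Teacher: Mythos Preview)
Your proposal is correct and follows essentially the same route as the paper. The paper's proof also rests on the probabilistic representation
\[
\norm{P_E X}_2\disteq \pnorm{X}\,\frac{\norm{Z}_2}{\pnorm{Z}}\,\frac{\norm{(G_1,\ldots,G_k)}_2}{\norm{G}_2},
\]
which is exactly your decomposition once one notes that $\norm{(G_1,\ldots,G_k)}_2^2/\norm{G}_2^2\sim\beta(k/2,(n-k)/2)$; thereafter the argument proceeds, as you outline, via Cram\'er's theorem, an independent LDP for the projection factor, and the contraction principle (with the stretched-exponential analysis replacing Cram\'er when $p<2$). The only cosmetic difference is that you treat the projection factor directly as a Beta variable and derive its LDP by Stirling, whereas the paper keeps the Gaussian-ratio form; both lead to the same rate function after the substitution $b=y^2/x^2$.
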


Let us emphasize that the proof of this theorem is in some sense similar to its $1$-dimensional counterpart that we have discussed in the previous section. However, there are a number of technicalities that need to be overcome when projections to high-dimensional subspaces are considered. Among others, one needs a new probabilistic representation of the target random variables. In fact, the previous theorem heavily relies on the following probabilistic representation, proved in \cite[Theorem 3.1]{APT} for the case \(X\sim\nu_n^p\). We shall give a proof here for a more general set-up, which might be of independent interest.

\begin{theorem}
		Let \(n\in\N\), \(k\in\{1,\ldots,n \}\) and \(p\in[1,\infty]\). Let \(X\) be a continuous \(p\text{-radial}\) random vector in \(\R^n\) and \(E\sim\eta_k^n\) be a random \(k\text{-dimensional}\) linear subspace. Let \(Z=(Z_1,\ldots,Z_n)\) and \(G=(G_1,\ldots,G_n)\) having i.i.d. coordinates, distributed according to the densities \(f_p\) and \(f_2\), respectively. Moreover, let \(X\), \(E\), \(Z\) and \(G\) be independent. Then
		\begin{equation*}
			\norm{P_E X}_2\disteq \pnorm{X}\frac{\norm{Z}_2}{\pnorm{Z} }\frac{\norm{(G_1,\ldots,G_k)}_2}{\norm{G}_2}.
		\end{equation*}
	\begin{proof}
		Fix a vector \(x\in\R^n\). By construction of the Haar measure \(\eta_k^n\) on \(\mathbb{G}_k^n\) and uniqueness of the Haar measure $\eta$ on $\mathbb{O}(n)$, we have that, for any $t\in\R$,
		\begin{equation*}
		\begin{split}
		\eta_k^n (E\in \mathbb{G}_k^n: \norm{P_E x}_2\geq t)&=\eta(T\in \mathbb O(n): \norm{P_{T E_0} x}_2\geq t)\\
		&=\eta(T\in \mathbb O(n): \norm{P_{E_0} Tx}_2\geq t)\\
		&=\eta\bigl(T\in \mathbb O(n): \norm{x}_2\norm[\big]{P_{E_0} T(x/\norm{x}_2)}_2\geq t\bigr),
		\end{split}
		\end{equation*}
		where $E_0\coloneqq\mathrm{span}(\{e_1,\ldots,e_k\})$. Again, by the uniqueness of the Haar measure $\sigma_2^n$ on $\mathbb{S}_2^{n-1}$, \(T(x/\norm{x}_2)\sim\sigma_2^n\), provided that $T\in\mathbb{O}(n)$ has distribution $\eta$. Thus,
	\[
	\eta\Bigl(T\in \mathbb O(n): \norm{x}_2\norm[\Big]{P_{E_0} T\Bigl(\frac{x}{\norm{x}_2}\Bigr)}_2\geq t\Bigr)=\sigma_2^n(u\in \mathbb{S}_2^{n-1}: \norm{x}_2\norm{P_{E_0} u}_2\geq t)\,.
	\]
	By \Cref{thm:cone}, $G/\norm{G}_2\sim\sigma_2^n$. Thus,
	\[
	\sigma_2^n (u\in \mathbb{S}_2^{n-1}: \norm{x}_2\norm{P_{E_0} Tu}_2\geq t)=\Pr\Bigl(\norm{x}_2\,\frac{\norm{P_{ E_0}G}_2 }{\norm{G}_2}\geq t\Bigr).
	\]
Therefore, if $E\in \G^n_k$ is a random subspace independent of $X$ having distribution $\eta_k^n$, and $G$ is a standard Gaussian random vector in $\R^n$ that is independent of $X$ and $E$, we have that
	\begin{equation*}
\Pr_{(X,E)}\bigl((x,F)\in\R^n \times\G_k^n:\norm{P_F x}_2\geq t\bigr)=
\Pr_{(X,G)}\Bigl((x,g)\in\R^n\times\R^n:\norm{x}_2\,\frac{\norm{P_{ E_0}g}_2 }{\norm{g}_2}\geq t\Bigr).
\end{equation*}
	Here, $\Pr_{(X,E)}$ denotes the joint distribution of the random vector $(X,E)\in\R^n\times\G_{k}^n$, while $\Pr_{(X,G)}$ stands for that of $(X,G)\in\R^n\times\R^n$. By \Cref{prop:SZGeneralK}, \(X\) has the same distribution as \(\pnorm{X}Z/\pnorm{Z}\). Therefore,
	\begin{equation*}
	\begin{split}
	&\Pr_{(X,G)}\Bigl((x,g)\in\R^n\times\R^n:\norm{x}_2\,\frac{\norm{P_{ E_0}g}_2 }{\norm{g}_2}\geq t\Bigr)\\
	&=\Pr_{(X,Z,G)}\Bigl((x,z,g)\in\R^n\times\R^n\times\R^n:\pnorm{x}\frac{\norm{z}_2}{\pnorm{z}}\frac{\norm{P_{ E_0}g}_2 }{\norm{g}_2}\geq t\Bigr)
	\end{split}
	\end{equation*}
	with $\Pr_{(X,Z,G)}$ being the joint distribution of the random vector $(X,Z,G)\in\R^n\times\R^n\times\R^n$. Consequently, we conclude that the two random variables \(\norm{P_E X}_2\) and \(\pnorm{X}\frac{\norm{Z}_2}{\pnorm{Z} }\frac{\norm{P_{E_0} G}_2}{\norm{G}_2}\)	have the same distribution.
	\end{proof}
	\end{theorem}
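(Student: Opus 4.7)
The plan is to split $\norm{P_E X}_2$ into three mutually independent factors by successive applications of (i) the rotational invariance of $\eta_k^n$, (ii) the Gaussian representation of the uniform distribution on $\mathbb{S}_2^{n-1}$, and (iii) the $p$-radial decomposition from \Cref{prop:SZGeneralK}. I first establish the identity for a fixed deterministic $x\in\R^n$ and then pass to the random $X$, introducing the auxiliary vectors $G$ and $Z$ on an enlarged product space so that the joint independence required by the statement is preserved.

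First I would fix $x\in\R^n\setminus\{0\}$. Using that $\eta_k^n$ is the pushforward of the Haar measure $\eta$ on $\mathbb{O}(n)$ under $T\mapsto TE_0$, with $E_0=\mathrm{span}(\{e_1,\ldots,e_k\})$, together with the identity $P_{TE_0}=TP_{E_0}T^{-1}$, the isometry of $T$ and the right-invariance of $\eta$ give
\[
\norm{P_E x}_2 \disteq \norm{P_{E_0} T x}_2 = \norm{x}_2\,\bigl\lVert P_{E_0}\bigl(T(x/\norm{x}_2)\bigr)\bigr\rVert_2, \qquad T\sim\eta.
\]
Since $T(x/\norm{x}_2)\sim\sigma_2^n$ for every unit vector by the uniqueness of the Haar measure on $\mathbb{S}_2^{n-1}$, this reduces to $\norm{P_E x}_2 \disteq \norm{x}_2\,\norm{P_{E_0} U}_2$ with $U\sim\sigma_2^n$.

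Next, since $\sigma_2^n=\mu_2^n$, \Cref{thm:cone} applied with $p=2$ yields $U\disteq G/\norm{G}_2$. Because $P_{E_0}G=(G_1,\ldots,G_k,0,\ldots,0)$, this gives
\[
\norm{P_{E_0} U}_2 \disteq \frac{\norm{(G_1,\ldots,G_k)}_2}{\norm{G}_2}.
\]
Choosing $G$ independent of $X$ and combining with the previous step via conditioning on $X$ and Fubini, I obtain
\[
\norm{P_E X}_2 \disteq \norm{X}_2\,\frac{\norm{(G_1,\ldots,G_k)}_2}{\norm{G}_2}.
\]

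Finally, I apply \Cref{prop:SZGeneralK} with $K=\pball$ to the $p$-radial vector $X$: the proposition and its independence assertion give $X\disteq \pnorm{X}\,Z/\pnorm{Z}$ with $Z$ independent of $\pnorm{X}$, so that $\norm{X}_2\disteq \pnorm{X}\,\norm{Z}_2/\pnorm{Z}$. Inserting this into the previous display, with $Z$ chosen jointly independent of $G$ and $\pnorm{X}$, yields the claimed identity. The main difficulty lies in careful bookkeeping of the independences rather than in any single computation: the auxiliary vectors $G$ and $Z$ must end up mutually independent and independent of $\pnorm{X}$. A mild subtlety is that \Cref{prop:SZGeneralK} is stated for vectors with density exactly $f(\norm{\cdot}_K)$ for a prescribed $f$, whereas here we only assume $X$ to be $p$-radial; however, by disintegrating the law of $X$ with respect to $\pnorm{X}$ and applying the proposition on each level set, the direction $X/\pnorm{X}$ inherits the cone measure $\mu_p^n$ independently of $\pnorm{X}$, which is all the argument requires.
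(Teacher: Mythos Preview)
Your proof is correct and follows essentially the same route as the paper: fix $x$, use the pushforward description $E=TE_0$ and the identity $P_{TE_0}=TP_{E_0}T^{-1}$ to reduce to $\norm{x}_2\norm{P_{E_0}U}_2$ with $U\sim\sigma_2^n$, replace $U$ by $G/\norm{G}_2$ via \Cref{thm:cone}, condition on $X$, and finally invoke \Cref{prop:SZGeneralK} to split $\norm{X}_2$ as $\pnorm{X}\norm{Z}_2/\pnorm{Z}$. Your remark about the gap between the hypothesis ``continuous $p$-radial'' and the more restrictive density assumption in \Cref{prop:SZGeneralK} is well spotted and your disintegration argument closes it; the paper's proof invokes \Cref{prop:SZGeneralK} at the same step without commenting on this point.
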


\begin{remark}
Let us remark that in his PhD thesis, Kim \cite{K} was recently able to extend the results from \cite{APT} and \cite{GKR2017} to more general classes of random vectors under an asymptotic thin-shell-type condition in the spirit of \cite{ABP2003} (see \cite[Assumption 5.1.2]{K}). For instance, this condition is satisfied by random vectors chosen uniformly at random from an Orlicz ball. 
\end{remark}

\subsubsection{Outlook -- the non-commutative setting}

The body of research on large deviation principles in asymptotic geometric analysis, which we have just described above, is complemented by another paper of Kim and Ramanan \cite{KimRamanan}, in which they proved an LDP for the empirical measure of an $n^{1/p}$ multiple of a point drawn from an $\ell_p^n$-sphere with respect to the cone or surface measure. The rate function identified is essentially the so-called relative entropy perturbed by some $p$-th moment penalty (see \cite[Equation (3.4)]{KimRamanan}). 

While this result is again in the commutative setting of the $\ell_p^n$-balls, Kabluchko, Prochno, and Th\"ale \cite{KPT2018b} recently studied principles of large deviations in the non-commutative framework of self-adjoint and classical Schatten $p$-classes. The self-adjoint setting is the one of the classical matrix ensembles which has already been introduced in Subsection \ref{subsubsec: outlook non-commutative schechtman-schmuckenschlaeger} (to avoid introducing further notation, for the case of Schatten trace classes we refer the reader to \cite{KPT2018b} directly).
In the spirit of \cite{KimRamanan}, they proved a so-called Sanov-type large deviations principles for the spectral measure of $n^{1/p}$ multiples of random matrices chosen uniformly (or with respect to the cone measure on the boundary) from the unit balls of self-adjoint and non self-adjoint Schatten $p$-classes where $0< p \leq +\infty$. The good rate function identified and the speed are quite different in the non-commutative setting and the rate is essentially given by the logarithmic energy (which is the negative of Voiculescu's free entropy introduced in \cite{V1993}). Interestingly also a perturbation by a constant connected to the famous Ullman distribution appears. This constant already made an appearance in the recent works \cite{KPT2018,KPT2018a}, where the precise asymptotic volume of unit balls in classical matrix ensembles and Schatten trace classes were computed using ideas from the theory of logarithmic potentials with external fields.

The main result of \cite{KPT2018b} for the self-adjoint case is the following theorem, where we denote by $\mathcal M(\R)$ the space of Borel probability measures on $\R$ equipped with the topology of weak convergence. On this topological space we consider the Borel $\sigma$-algebra, denoted by $\mathcal B(\mathcal M(\R))$.

\begin{theorem}\label{thm:sanov type ldp schatten}
Fix $p\in(0,\infty)$ and $\beta\in\{1,2,4\}$. For every $n\in\N$, let $Z_n$ be a random matrix chosen according to the uniform distribution on $\mathbb{B}_{p,\beta}^n$ or the cone measure on its boundary. Then the sequence of random probability measures
\begin{equation*}
\mu_n= \frac{1}{n}\sum_{i=1}^n \delta_{n^{1/p}\lambda_i(Z_n)},
\qquad n\in\N,
\end{equation*}
satisfies an LDP on $\mathcal M(\R)$ with speed $n^2$ and good rate function $\mathcal I:\mathcal M(\R) \to [0,+\infty]$ defined by
\begin{equation*}
\label{eq:J_def_rate_funct}
\mathcal I(\mu) =
\begin{cases}
- \frac \beta 2 \int_{\R}\int_{\R} \log\abs{x-y} \, \mu(\de x)\,\mu(\de y) + \frac{\beta}{2p} \log\Bigl(\frac{\sqrt{\pi}p \Gamma(\frac{p}{2})}{2^p\sqrt{e}\Gamma(\frac{p+1}{2})}\Bigr) &\caseif \int_{\R}\abs{x}^p\mu(\de x) \leq 1,\\
+\infty &\caseif\int_{\R}\abs{x}^p\mu(\de x) > 1.
\end{cases}
\end{equation*}
\end{theorem}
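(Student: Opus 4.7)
The plan is to reduce the statement to an LDP for a Coulomb-gas type eigenvalue density by means of the Weyl integration formula, and then invoke the standard Ben Arous--Guionnet / Hiai--Petz machinery for empirical-measure LDPs from random matrix theory. This reduction is natural because $\mathbb{B}_{p,\beta}^n$ is defined entirely in terms of the spectrum of $A\in\mathscr H_n(\mathbb{F}_\beta)$, so the joint distribution of the eigenvalues of $Z_n$ carries all the information needed to describe $\mu_n$.

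First I would write down the joint density of the rescaled eigenvalues $\tilde\lambda_i\coloneqq n^{1/p}\lambda_i(Z_n)$. The Weyl integration formula pushes forward the Lebesgue measure on $\mathscr H_n(\mathbb{F}_\beta)$ under the eigenvalue map to a density proportional to $|\Delta(\lambda)|^\beta$ on the Weyl chamber in $\R^n$, where $\Delta(\lambda)=\prod_{i<j}(\lambda_i-\lambda_j)$. Combining this with $Z_n\sim\mathrm{Unif}(\mathbb{B}_{p,\beta}^n)$ and the change of variables $\tilde\lambda_i=n^{1/p}\lambda_i$ yields a joint density
\begin{equation*}
\tilde f_n(\tilde\lambda)=\frac{1}{Y_n}\,|\Delta(\tilde\lambda)|^\beta\,\indic\Bigl\{\oneover n\sum_{i=1}^n|\tilde\lambda_i|^p\le 1\Bigr\}
\end{equation*}
for an explicit normalization $Y_n>0$; the cone-measure case produces the same expression with the inequality replaced by an equality, up to a subleading factor. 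Expressed through $\mu_n=\oneover n\sum_{i=1}^n\delta_{\tilde\lambda_i}$, the Vandermonde factor reads
\begin{equation*}
|\Delta(\tilde\lambda)|^\beta=\exp\Bigl(\frac{\beta n^2}{2}\iint_{x\ne y}\log|x-y|\,\de\mu_n(x)\,\de\mu_n(y)\Bigr),
\end{equation*}
which exposes the $n^2$ speed and the logarithmic-energy rate functional, while the $\ell_p$-constraint becomes $\int|x|^p\de\mu_n\le 1$.

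Next I would establish the LDP by the now-standard strategy of Ben Arous--Guionnet: for the upper bound, truncate $\log|x-y|$ below at $-M$ to obtain a bounded continuous functional of $\mu_n$, apply Varadhan-type estimates, and then let $M\to\infty$, controlling the error via the $p$-th moment constraint; exponential tightness on $\mathcal M(\R)$ comes from the same constraint. For the lower bound, approximate a given candidate $\mu$ (with $\int|x|^p\de\mu<1$) by $n$ quantile points and bound the ambient density below on a small tube around them, estimating the resulting Vandermonde product by a Riemann-sum argument. This produces an LDP at speed $n^2$ with rate function
\begin{equation*}
\mathcal I(\mu)=-\frac{\beta}{2}\iint\log|x-y|\,\de\mu(x)\de\mu(y)-\lim_{n\to\infty}\oneover{n^2}\log Y_n
\end{equation*}
on the set $\{\mu\in\mathcal M(\R):\int|x|^p\de\mu\le 1\}$, and $+\infty$ elsewhere.

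The main obstacle is to evaluate the constant $\lim_{n\to\infty}\oneover{n^2}\log Y_n$ and match it with the explicit value in the statement. A variational argument identifies this limit with $\frac{\beta}{2}\sup\{\iint\log|x-y|\de\mu\de\mu:\int|x|^p\de\mu\le 1\}$, a weighted logarithmic-potential problem whose unique maximizer is a suitable rescaling of the \emph{Ullman distribution}. Fortunately, precisely this variational problem drives the asymptotic volume computation of $\mathbb{B}_{p,\beta}^n$ carried out in \cite{KPT2018,KPT2018a}, from which one can directly read off the claimed value $\frac{\beta}{2p}\log\bigl(\sqrt\pi\, p\,\Gamma(\tfrac p2)/(2^p\sqrt e\,\Gamma(\tfrac{p+1}{2}))\bigr)$. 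Passing from the uniform to the cone-measure case requires only a minor adjustment, because the radial and spectral-angular contributions decouple at the leading $n^2$ scale, so both sampling schemes yield the same rate function.
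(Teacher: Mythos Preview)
Your route is sound in outline but differs from the one the paper sketches. The paper, following \cite{KPT2018b}, does not attack the hard-constraint density $\abs{\Delta(\tilde\lambda)}^\beta\,\indic\{\oneover{n}\sum_i\abs{\tilde\lambda_i}^p\le 1\}$ directly. Instead it first invokes the probabilistic representation for random points in $\mathbb{B}_{p,\beta}^n$ obtained in \cite{KPT2018a}---a matrix analogue of the Schechtman--Zinn representation---which expresses the spectrum of $Z_n$ through that of a matrix from the \emph{soft-potential} ensemble with density proportional to $\exp(-n\,\mathrm{tr}\abs{A}^p)$. For that ensemble the eigenvalue density is a genuine Coulomb gas with one-body potential $V(x)=\abs{x}^p$, so the Ben Arous--Guionnet machinery applies off the shelf. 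The $p$-th moment constraint is then recovered by controlling \emph{simultaneously} the pair $(\mu_n,\int\abs{x}^p\,\de\mu_n)$ in the product of the weak topology on $\mathcal M(\R)$ and the standard topology on $\R$, together with exponential tightness in that product space.

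Your direct approach can be pushed through, but one point deserves more care than you give it. The Ben Arous--Guionnet template is designed for densities with a single-site potential $\sum_i V(\lambda_i)$; your density instead carries a \emph{collective} constraint $\int\abs{x}^p\,\de\mu_n\le 1$, which is a functional of the whole empirical measure. For the upper bound this is harmless (the indicator only shrinks the region of integration), and exponential tightness is in fact immediate since $\mu_n$ almost surely lies in the Prokhorov-compact set $\{\mu:\int\abs{x}^p\,\de\mu\le 1\}$. The lower bound at boundary measures with $\int\abs{x}^p\,\de\mu=1$, however, requires an explicit approximation step (e.g.\ a slight rescaling pushing the moment strictly below $1$ while moving the logarithmic energy continuously) that you do not supply; the functional $\mu\mapsto\int\abs{x}^p\,\de\mu$ is only lower semicontinuous in the weak topology, so this is not automatic. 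The paper's product-topology device is built precisely to handle this boundary issue cleanly, which is the main dividend of that route. Your identification of the additive constant via the asymptotic-volume results of \cite{KPT2018,KPT2018a} matches what the paper does.
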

Let us note that the case $p=+\infty$ as well as the case of Schatten trace classes is also covered in that paper (see \cite[Theorems 1.3 and 1.5]{KPT2018b}). The proof of Theorem \ref{thm:sanov type ldp schatten} requires to control \emph{simultaneously} the deviations of the empirical measures and their $p$-th moments towards arbitrary small balls in the product topology of the weak topology on the space of probability measures and the standard topology on $\R$. It is then completed by proving exponential tightness. Moreover, they also use the probabilistic representation for random points in the unit balls of classical matrix ensembles which they have recently obtained in \cite{KPT2018a}. We close this survey by saying that as a consequence of the LDP in Theorem \ref{thm:sanov type ldp schatten}, they obtained that the spectral measure of $n^{1/p} Z_n$ converges weakly almost surely to a non-random limiting measure given by the Ullman distribution, as $n\to\infty$ (see \cite[Corollary 1.4]{KPT2018b} for the self-adjoint case and \cite[Corollary 1.6]{KPT2018b} for the non-self-adjoint case). 

 \printbibliography[heading=bibintoc]
 
 \bigskip
 
 \bigskip
 
 \bigskip
 
 \bigskip
 
 Joscha Prochno: Institut für Mathematik und Wissenschaftliches Rechnen, Karl-Franzens-Universität Graz, Austria
 
 \textit{E-mail address:} \nolinkurl{joscha.prochno@uni-graz.at}
 
 \medskip
 
 Christoph Thäle: Fakultät für Mathematik, Ruhr-Universität Bochum, Germany
 
 \textit{E-mail address:} \nolinkurl{christoph.thaele@rub.de}
 
  \medskip
 
  Nicola Turchi: Fakultät für Mathematik, Ruhr-Universität Bochum, Germany
 
 \textit{E-mail address:} \nolinkurl{nicola.turchi@rub.de}
\end{document}